\documentclass[10pt,a4paper]{amsart}
\usepackage[margin=1.0 in]{geometry}

\usepackage{amsmath,amssymb}
\usepackage[english]{babel}
\usepackage{url}
\usepackage{tikz,pgf}
\usetikzlibrary{calc}
\usetikzlibrary{decorations.markings}
\usetikzlibrary{shapes.geometric}
\usetikzlibrary{patterns}
\usetikzlibrary{intersections}
\usetikzlibrary{arrows.meta}
\usepackage[all]{xy}
\usepackage{tikz-cd}
\usepackage[colorinlistoftodos,prependcaption,textsize=tiny]{todonotes}
\usepackage{amsmath}
\usetikzlibrary{arrows,positioning,calc,patterns}
\usepackage{tikz-cd}
\usepackage{comment}
\usepackage{hyperref}
\hypersetup{
	colorlinks,
	linkcolor={red!50!black},
	citecolor={blue!50!black},
	urlcolor={blue!80!black},
	breaklinks=true
}
\usepackage{cleveref}
\tikzstyle{vertex}=[circle, draw=white, fill=black, inner sep=0pt, minimum size=4pt]
\tikzstyle{labelsty}=[font=\scriptsize]
\colorlet{ecol}{black!50!white}
\tikzstyle{edge}=[ecol,line width=1.5pt]

\theoremstyle{plain}
\newtheorem{lemma}{Lemma}[section]
\newtheorem{proposition}[lemma]{Proposition}
\newtheorem{theorem}[lemma]{Theorem}

\newtheorem*{theorem-nonum}{Theorem}

\theoremstyle{remark}
\newtheorem{remark}[lemma]{Remark}
\theoremstyle{definition}

\newtheorem{definition}[lemma]{Definition}

\newcommand{\R}{\mathbb{R}}

\title{A Geometric Criterion for Degeneracy in the Elekes-Szab\'{o} Theorem}

\author[M. Makhul]{Mehdi Makhul}
\address{Johann Radon Institute for Computational and Applied Mathematics (RICAM), Linz, Austria}
\email{makhul85@gmail.com}

\begin{document}

\begin{abstract}
The Elekes-Szab\'{o} theorem establishes that, under the usual finite-fibre projection hypothesis, an algebraic hypersurface $Z(F)$ contains few grid points unless it exhibits a specific local group-related structure. Identifying this structure directly from the defining equation is a challenging problem in combinatorial geometry. Our first main result
(Theorem~\ref{thm: regularity criterion}) gives a local differential characterization of this degeneracy for implicitly defined real analytic hypersurfaces, in terms of a separated factorization of the partial derivatives of $F$ along $Z(F)$.

We then develop a geometric framework for \textit{boundary varieties}, defined by the vanishing of partial derivatives along $Z(F)$. Theorem~\ref{thm: bounadry variety} shows that, when the separated factorization extends across a neighbourhood of the boundary, every local irreducible component of a boundary variety is contained in a coordinate slice. Proposition~\ref{prop:boundary-continuation} extends this conclusion to hypersurfaces which are degenerate on a dense coordinate-regular locus. This gives a geometric constraint on the loci where $Z(F)$ becomes tangent to coordinate directions.
		
As an application, we study configurations formed by $d$ coordinate-grid hyperplane families together with a one-parameter polynomial family of hyperspheres in $\mathbb{R}^d$. If one chooses $n$ members from each of these $d+1$ families and obtains $\Omega(n^{d-\eta})$ common incidence points, for a
suitable constant $\eta>0$, then the hypersphere family is forced to have a very restricted form: it is concentric in dimensions $d\geq 3$, and in dimension $2$ it is either concentric or consists of fixed-radius circles whose centres lie on a line parallel to a coordinate axis.

We also generalize the pinned distance problem initiated by Elekes and Szab\'{o} for three points in the plane to $d+1$ points in $\mathbb{R}^d$. More precisely, in Theorem~\ref{thm: pin distance} we prove that if $d+1$ families of hyperspheres centred at fixed points determine $\Omega(n^{d-\eta})$ points, for a suitable constant $\eta>0$, each lying on one hypersphere from each family, then the centres must be affinely dependent.
\end{abstract}
	
\maketitle
	
\section{Introduction}	
	
Throughout this paper, we write $Y=\Omega(X)$ if there exists a constant $c>0$, independent of the asymptotic parameter, such that $Y\geq cX$.

Finding upper bounds for the intersection of algebraic varieties with Cartesian grids is commonly referred to as the \emph{Elekes-Szab\'{o} problem}. In particular, suppose that $Z(F)\subset\R^d$ is an irreducible algebraic hypersurface of degree $r$, and assume that the projection of $Z(F)$ to any $d-1$ coordinates is finite-to-one. Then a higher-dimensional version of the Elekes-Szab\'{o} theorem asserts that exactly one of the following alternatives holds:
\begin{itemize}
	\item There exist a constant $c=c(d, r)>0$ and $\eta_0>0$ such that, for every Cartesian grid
	$A_1\times \cdots \times A_d$ with $|A_i|=n$, we have
	\[
    |Z(F)\cap(A_1\times\cdots\times A_d)| \le c n^{d-1-\eta_0}.
	\]
	\item There exist nonempty open sets $U_i\subset\mathbb R$, a common open interval $J\subset\mathbb R$ containing $0$, and analytic bijections $f_i:U_i\to J$ with analytic inverses such that, on $U_1\times\cdots\times U_d$,
	\begin{equation}\label{eq: ES}
	F(x_1,\ldots,x_d)=0
	\quad\Longleftrightarrow\quad
	f_1(x_1)+\cdots+f_d(x_d)=0.
	\end{equation}
\end{itemize}

We refer to the latter as the \emph{local group-structure}, or \emph{degenerate}, alternative. It is important that this conclusion is local: it is obtained on some nonempty open product set, not a priori at every coordinate-regular point of the hypersurface. In the applications below, Lemma~\ref{lem:propagation-degeneracy} will allow us to pass from this local conclusion to degeneracy in the sense of Definition~\ref{def:degenerate_polynomial}. The study of this problem was initiated by Elekes and Szab\'{o}~\cite{ES1}, who proved the first such dichotomy in the case $d=3$. The theorem was subsequently extended in several directions, including to the complex setting and to the four-dimensional case, using incidence-geometric methods~\cite{RSZ, RSZ1}. Bays and Breuillard~\cite{BB} obtained a similar result for an arbitrary number of variables, though without an explicit exponent and with a different description of the exceptional case. A quantitative higher-dimensional framework was later obtained by Chernikov, Peterzil, and Starchenko~\cite{CPS}.

The Elekes-Szab\'{o} theorem has seen a wide range of applications to both additive combinatorics and discrete geometry. Examples include the distinct-distances problem for points on two lines \cite{SSS}, distinct distances determined by points on algebraic curves in the plane \cite{PZ}, and bounds for triple intersection points determined by three families of unit circles \cite{Elekes, RSS}.

In another direction, several works have focused on improving the exponent $\eta_0$ appearing in the first alternative. In~\cite{MRWZ}, the authors constructed a polynomial $F$ in $d$ variables, together with sets $A_1,\dots,A_d$, each of cardinality $n$, such that
\[
|Z(F) \cap (A_1 \times \dots \times A_d)| = \Omega (n^{d-1-\frac{1}{2}}).
\]
In the case $d=3$, better bounds are known under additional assumptions or by different methods: an improved exponent was obtained in~\cite{MRSW} assuming the Uniformity Conjecture, while~\cite{SZ} obtained a further improvement using the proximity method.

While the Elekes-Szab\'{o} theorem establishes that such degeneracy is the only path to near-maximal incidence patterns between algebraic hypersurfaces and Cartesian grids, identifying this structure directly from the defining equation remains a significant challenge. Differential criteria are known in the setting of explicit polynomial functions. For example, Raz and Shem-Tov~\cite[Lemma~2.3]{RST} consider a polynomial $f(x_1,\ldots,x_d)$ satisfying a system of the form
\[
\frac{f_{x_1}}{r_1(x_1)}=\cdots=\frac{f_{x_d}}{r_d(x_d)},
\]
where the $r_i$ are one-variable rational functions, and show that this forces $f$ to have an additive or multiplicative separated polynomial form. Their setting concerns explicit polynomial functions, or equivalently their graph hypersurfaces. Here our aim is instead to give a local analytic criterion directly for a general implicitly defined hypersurface
\[
F(x_1,\ldots,x_d)=0.
\]
Thus the central problem addressed in this paper is the following: given a smooth hypersurface $Z(F)$ in $\R^d$, how can one determine directly from its defining equation whether it is degenerate?

A related geometric viewpoint appears in web geometry, where boundary curves are used to study regularity of curvilinear three-webs; see, for example, Shelekhov~\cite{Shelekhov}. Although that work belongs to the local theory of three-webs, it suggests that tangency loci can be useful in detecting hidden geometric structure. In the present paper we use this viewpoint in the Elekes-Szab\'{o} degeneracy problem for implicit analytic hypersurfaces.

Our first result gives a local differential criterion for this problem (Theorem~\ref{thm: regularity criterion}). It may be viewed as an implicit analytic analogue of the differential characterizations described above: degeneracy is equivalent to a separated factorization of the partial derivatives of $F$ along $Z(F)$ into a common analytic term and one-variable analytic factors.

We then introduce boundary varieties, which record the points where $Z(F)$ becomes tangent to coordinate directions. Theorem~\ref{thm: bounadry variety} shows that, when the separated factorization extends across a neighbourhood of the boundary, every local irreducible component of a boundary variety is contained in a coordinate slice. Proposition~\ref{prop:boundary-continuation} gives the corresponding conclusion when degeneracy is known only on a dense coordinate-regular locus. Thus Theorem~\ref{thm: regularity criterion} provides the differential characterization, while Theorem~\ref{thm: bounadry variety} and
Proposition~\ref{prop:boundary-continuation} provide the geometric boundary obstruction used in the applications below.

We use the boundary variety framework in two directions. The first concerns one-parameter families of hyperspheres. We show that if such a family has near-maximal intersection with Cartesian grids, then the family must be highly restricted. More precisely, for one-parameter polynomial families of hyperspheres, we prove that near-maximal intersections with Cartesian grids force the family to have one of two explicit forms: If $d\ge 3$, then the hyperspheres must be concentric. If $d=2$, then there are exactly two possible forms: either the circles are concentric, or they have fixed radius and their centres lie on a line parallel to one of the coordinate axes.

Our second application concerns a higher-dimensional version of the pinned distance problem initiated by Elekes and Szab\'{o}~\cite[Theorem $32$]{ES1}. Elekes and Szab\'{o} studied three families of circles in the plane, centred at three fixed points. Take $n$ circles from each family, and they showed that the existence of many triple-rich points (points lying on one circle from each family) forces the three centres to be collinear. We generalize this construction to an arbitrary dimension. More precisely, we consider $d+1$ families of hyperspheres in~$\mathbb{R}^d$, each family centred at a fixed point~$p_i$. We take $n$ hyperspheres from each family and prove that if these families determine sufficiently many $(d+1)$-rich points, then the centres $p_1,\dots,p_{d+1}$ must be affinely dependent.

\section{Statement of the main results}\label{sec: main results}

We now state the main results of the paper. The first step is to make precise the special local form appearing in the Elekes-Szab\'{o} alternative~\eqref{eq: ES}. We use the following definition throughout the paper.

\begin{definition}\label{def:coordinate-regular}
Let $\Omega\subset\mathbb R^d$ be open, let $F\colon\Omega\to\mathbb R$ be real analytic, and let $p\in Z(F):=\{x\in\Omega:F(x)=0\}$. We say that $p$ is a \emph{coordinate regular point} of $Z(F)$ if
\[
F_{x_i}(p)\neq0 \qquad\text{for every }i=1,\ldots,d.
\]
\end{definition}

\begin{definition}\label{def:degenerate_polynomial}
Let $\Omega\subset\R^d$ be open and let $F\colon\Omega\to\R$ be real analytic. We say that $F$ is \emph{locally degenerate} at a coordinate regular point $p=(a_1,\dots,a_d)\in Z(F)$ if there exist open intervals $I_i\subset\mathbb R$ containing $a_i$, with $U=I_1\times\cdots\times I_d\subset\Omega$, and analytic isomorphisms $f_i:I_i\to J$ onto a common open interval $J$ containing $0$ such that on $U$,
\[
F(x_1,\dots,x_d)=0 \;\Longleftrightarrow\; f_1(x_1)+\cdots+f_d(x_d)=0.
\]
We say that $F$ is \emph{degenerate} if it is locally degenerate at every coordinate regular point of $Z(F)$.
\end{definition}

The first main result gives a local differential characterization of this notion, in terms of a separated factorization of the partial derivatives of $F$ along its zero set.

\begin{theorem}\label{thm: regularity criterion}
Let $F(x_1,\ldots,x_d)$ be a locally irreducible real analytic function, and let $p=(a_1,\ldots,a_d)$ be a coordinate regular point of $Z(F)$. If $F$ is locally degenerate at $p$, then there exist a neighbourhood $U$ of $p$, a real analytic function $G(x_1,\ldots,x_d)$ on $U$, and two sets of real analytic single-variable functions
\[
\{\alpha_{1,j}(x_j)\}_{j=1}^d \qquad\text{and}\qquad \{\alpha_{2,j}(x_j)\}_{j=1}^d
\]
such that
\[
F_{x_i}=G(x_1,\ldots,x_d)\alpha_{2,i}(x_i) \prod_{j\neq i}\alpha_{1,j}(x_j)
\]
on $U\cap Z(F)$, for every $i=1,\ldots,d$. Conversely, if the above factorization holds in a neighbourhood of a coordinate regular point  $p$, then $F$ is locally degenerate at $p$.
\end{theorem}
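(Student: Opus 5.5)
The plan is to translate local degeneracy into the statement that $\nabla F$ is parallel to the gradient of a separated function $H=f_1(x_1)+\cdots+f_d(x_d)$ along $Z(F)$. Near a coordinate regular point both $Z(F)$ and $Z(H)$ are smooth hypersurfaces, so this parallelism is the whole content of degeneracy.

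\emph{Forward direction.} Suppose $F$ is locally degenerate at $p$, with intervals $I_i$, analytic isomorphisms $f_i\colon I_i\to J$, and $U=I_1\times\cdots\times I_d$. Put $H(x)=\sum_i f_i(x_i)$. Each $f_i$ is an analytic isomorphism, so $f_i'\neq0$ on $I_i$, and $\nabla H\neq 0$ everywhere on $U$.

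Since $p$ is coordinate regular, $\nabla F(p)\neq0$. After shrinking $U$, the set $Z(F)\cap U=Z(H)\cap U$ is therefore a smooth analytic hypersurface that both $F$ and $H$ define with nonvanishing gradient. Consequently $\nabla F$ and $\nabla H$ both span the normal line to this hypersurface at each of its points. Hence on $Z(F)\cap U$ we have $\nabla F=\lambda\nabla H$ with
\[
\lambda=\frac{F_{x_1}}{f_1'(x_1)}.
\]
This expression is defined and real analytic on all of $U$, because $f_1'\neq 0$ there. Set $G:=\lambda$, $\alpha_{2,i}:=f_i'$ and $\alpha_{1,j}:\equiv1$. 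Then
\[
F_{x_i}=G\,\alpha_{2,i}(x_i)\prod_{j\neq i}\alpha_{1,j}(x_j)
\qquad\text{on } U\cap Z(F),
\]
which is the required factorization. Local irreducibility is not really needed here; smoothness at $p$ suffices.

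\emph{Converse, construction of $H$.} Assume the factorization holds on $U\cap Z(F)$. At $p$ every $F_{x_i}(p)\neq0$, so $G(p)$, $\alpha_{2,i}(a_i)$ and $\alpha_{1,j}(a_j)$ are all nonzero. After shrinking $U$ to a box around $p$, they are nonzero throughout. Rewrite the factorization as
\[
F_{x_i}=\Big(G\prod_{j}\alpha_{1,j}(x_j)\Big)\cdot\frac{\alpha_{2,i}(x_i)}{\alpha_{1,i}(x_i)} .
\]
Then define
\[
f_i(x_i):=\int_{a_i}^{x_i}\frac{\alpha_{2,i}(t)}{\alpha_{1,i}(t)}\,dt,\qquad H:=\sum_i f_i .
\]
Each $f_i$ is analytic, satisfies $f_i(a_i)=0$ and has $f_i'\neq0$, so it is strictly monotone. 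Moreover $H(p)=0$, and on $Z(F)\cap U$ the identity $\nabla F=\mu\nabla H$ holds with $\mu=G\prod_j\alpha_{1,j}\neq0$.

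\emph{Converse, equality of zero sets.} Let $S$ be the connected component of $Z(F)\cap U$ through $p$. Since $\nabla F\neq 0$ on $S$, its tangent space at each point is $\ker\nabla F=\ker\nabla H$. So the restriction $H|_S$ has zero differential, is constant on $S$, and equals $H(p)=0$. Thus $S\subset Z(H)$.

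To upgrade this to equality on a product box, use $F_{x_d}(p)\neq0$ and $H_{x_d}(p)\neq0$ with the implicit function theorem. On a small box $I_1\times\cdots\times I_d$ around $p$, each of $Z(F)$ and $Z(H)$ is the graph of a function $x_d=\phi(x_1,\dots,x_{d-1})$, respectively $x_d=\psi(x_1,\dots,x_{d-1})$, over the connected base $I_1\times\cdots\times I_{d-1}$. The box must be chosen so that these graphs leave through the top and bottom faces rather than the sides; arranging this, together with the inclusion $S\subset Z(H)$, gives $\phi=\psi$. Hence $F=0\iff H=0$ on the box.

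\emph{Converse, a common interval $J$.} Choose $\varepsilon>0$ so small that $f_i^{-1}((-\varepsilon,\varepsilon))\subset I_i$ for every $i$. Replace $I_i$ by $f_i^{-1}((-\varepsilon,\varepsilon))$ and set $J=(-\varepsilon,\varepsilon)$. The new $I_i$ still contain $a_i$, the product is still a box around $p$ on which the equivalence holds, and each $f_i\colon I_i\to J$ is an analytic isomorphism. This is exactly local degeneracy at $p$.

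The main obstacle is the equality of zero sets on a product box. The gradient parallelism only gives $S\subset Z(H)$ on a connected piece. The real work is choosing the box compatibly with the implicit-function graphs so that both zero sets are single graphs over the same base; the normalization to a common $J$ is then routine.
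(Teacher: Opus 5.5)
Your proposal is correct and follows essentially the paper's proof. In the forward direction you choose the same factorization $\alpha_{1,j}\equiv 1$, $\alpha_{2,i}=f_i'$; you obtain the proportionality $\nabla F=\lambda\nabla H$ from equality of tangent spaces, where the paper differentiates the graph $x_d=\psi=f_d^{-1}(-u)$. In the converse you use the same primitives $f_i=\int_{a_i}^{x_i}\alpha_{2,i}/\alpha_{1,i}$ and show that $H$ is constant along $Z(F)$, which is the paper's $\Phi\equiv 0$ argument.

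Your handling of the product box and the common interval $J$ is more careful than the paper's. One wording slip: the box should be chosen so that $\phi$ and $\psi$ map the whole base into $I_d$. In other words, the graphs exit through the lateral faces, not through the top and bottom faces as you wrote.
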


%
Theorem~\ref{thm: regularity criterion} gives a differential criterion for degeneracy. For the applications below, we will use a geometric consequence of this criterion, formulated in terms of the following auxiliary varieties.

Let $V= Z(F) \subset \R^d$ be smooth real analytic hypersurface defined by the zero locus of a real analytic function $F(x_1,\dots, x_d)$. The \emph{coordinate regular locus} of $V$ is
\[
V^{\circ} = \left\{ p\in V_{\mathrm{sm}}: F_{x_i}(p)\neq 0 \text{ for every } i=1,\ldots,d \right\}.
\]
We associate $d$ families of varieties with $V$ by considering the intersections with coordinate hyperplanes:
\[
V_i(c):= V \cap \left\{x_i=c \right\}.
\]
We call $V_i(c)$ a coordinate slice of $V$.

For each $i \in \left\{1, \dots, d \right\}$ we define the $i$-th \emph{boundary variety} of $V$, denoted by $\Sigma_i(V)$, as the subset of $V_{\mathrm{sm}}$ where the $i$-th coordinate direction is tangent to the hypersurface:
\begin{equation}\label{eq: boundary variety}
\Sigma_i(V)=\left\{p\in V_{\mathrm{sm}}: \quad    \frac{\partial F}{\partial x_i}(p)=0 \right\}.
\end{equation}
Equivalently, 
\[
\Sigma_i(V)=V_{\mathrm{sm}} \cap Z(F_{x_i}).
\]
If $V$ is smooth, then this is given by
\[
\Sigma_i(V)=Z(F) \cap Z(F_{x_i}).
\]
The following result shows that, under the separated derivative factorization, boundary varieties are forced to lie locally inside coordinate slices.

\begin{theorem} \label{thm: bounadry variety}
Let $d \ge 2$, let $F(x_1,\ldots,x_d)$ be a real analytic function, and let $V=Z(F)$, and let $U$ be a neighbourhood of a point $p\in V$. Assume that 
\[
\nabla F(q) \not=0 \quad \text{for each}\quad q\in U \cap V.
\]
Suppose that, on $U\cap V$, the partial derivatives of $F$ admit a separated factorization
\[
F_{x_i} = G(x_1,\ldots,x_d)\alpha_{2,i}(x_i) \prod_{j\neq i}\alpha_{1,j}(x_j), \qquad i=1,\ldots,d,
\]
where $G$ is analytic on $U$, and the functions $\alpha_{1,j},\alpha_{2,j}$ are one-variable analytic functions which are not identically zero on the chosen coordinate neighbourhoods. Then every local irreducible component of the boundary variety 
\[ 
\Sigma_i(V)\cap U = \{q\in V\cap U:F_{x_i}(q)=0\} 
\] 
is contained in a coordinate slice 
\[ 
V_j(c)=V\cap\{x_j=c\} 
\] 
for some $j\in\{1,\ldots,d\}$ and some constant $c$.
\end{theorem}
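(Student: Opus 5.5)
Since $F_{x_i}=G\,\alpha_{2,i}(x_i)\prod_{j\neq i}\alpha_{1,j}(x_j)$ on $U\cap V$, the boundary variety splits as a union:
\[
\Sigma_i(V)\cap U \subseteq \bigl(Z(G)\cap V\bigr)\;\cup\;\bigcup_{c:\,\alpha_{2,i}(c)=0}V_i(c)\;\cup\;\bigcup_{j\neq i}\ \bigcup_{c:\,\alpha_{1,j}(c)=0}V_j(c).
\]
After shrinking $U$ to a product of intervals, each one-variable analytic function that is not identically zero has only finitely many zeros. So the last two pieces are finite unions of coordinate slices. The core of the proof is therefore to show that $Z(G)\cap V$ contributes nothing, or contributes only pieces that already lie inside these slices. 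Once that is done, any local irreducible component of $\Sigma_i(V)\cap U$ is contained in a finite union of analytic sets $V_j(c)$. By irreducibility of the local component (taking germs at $p$), it then lies in a single $V_j(c)$.

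To rule out $Z(G)\cap V$, I would use the hypothesis $\nabla F\neq 0$ on $U\cap V$. Suppose $q\in U\cap V$ satisfies $G(q)=0$. Then every $F_{x_k}(q)$ carries the factor $G(q)$, so all partial derivatives vanish at $q$. This contradicts $\nabla F(q)\neq0$, and hence $G$ has no zeros on $U\cap V$. The same argument handles any point where two of the one-variable factors vanish simultaneously for different indices. For instance, if $\alpha_{1,j}(q_j)=0$ and $\alpha_{1,k}(q_k)=0$ with $j\neq k$, then every $F_{x_l}$ contains at least one of these factors, so $\nabla F(q)=0$. This is not needed for the containment itself, but it gives a cleaner picture: near each point, $\Sigma_i(V)$ is cut out by exactly one vanishing one-variable factor.

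For the irreducibility step, work with the germ of $\Sigma_i(V)\cap U$ at a point. The components are germs of analytic sets $W$ with
\[
W\subseteq \bigcup_{(j,c)} \bigl(V\cap\{x_j=c\}\bigr),
\]
a finite union. Then
\[
W=\bigcup_{(j,c)} \bigl(W\cap\{x_j=c\}\bigr)
\]
is a finite union of analytic subgerms, so irreducibility forces $W\subseteq\{x_j=c\}$ for some pair $(j,c)$. For the statement about global local components in $U$, the finiteness of the zero sets (after shrinking $U$) makes the union finite in all of $U$. The same argument then applies to an irreducible analytic component of $\Sigma_i(V)\cap U$: write it as a union of its closed analytic intersections with each slice, and use irreducibility.

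I expect the main obstacle to be bookkeeping about the neighbourhood rather than a genuine difficulty. One issue is ensuring that the zero sets of the $\alpha$'s are finite, or at least discrete, so that the union is locally finite; I would handle this by shrinking $U$ to a relatively compact product box. A second issue is that the factorization holds only on $U\cap V$, not on all of $U$. This is harmless, because $\Sigma_i$ lives on $V$, and the gradient argument only evaluates at points of $V$.
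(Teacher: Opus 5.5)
Your proof is correct and is essentially the paper's argument: you use $\nabla F\neq0$ on $U\cap V$ to rule out zeros of $G$ on $V$, reduce $\Sigma_i(V)\cap U$ to a locally finite union of slices $V_j(c)$ coming from the zeros of $\alpha_{2,i}$ and of the $\alpha_{1,j}$ with $j\neq i$, and conclude by irreducibility. Only your optional side remark overreaches. Two factors with different indices can vanish at the same point while $\nabla F\neq0$: if $\alpha_{2,i}(q_i)=\alpha_{1,j}(q_j)=0$ with $j\neq i$, then $F_{x_j}(q)$ is not forced to vanish (e.g.\ $F=x_2$, $G=1$, $\alpha_{2,1}=x_1$, $\alpha_{1,2}=x_2$, all other factors $1$, at $q=0$). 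So $\Sigma_i(V)$ need not be cut out by exactly one factor near every point, but, as you note, the containment argument does not use this.
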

The following proposition extends the geometric conclusion of Theorem~\ref{thm: bounadry variety} to hypersurfaces which are degenerate on a dense coordinate-regular locus.
\begin{proposition}\label{prop:boundary-continuation}
Let $d \ge 2$, let $F(x_1,\ldots,x_d)$ be a real analytic function, let $V=Z(F)$, and let $U$ be a neighbourhood of a point $p\in V$. Assume that
\[
\nabla F(q)\neq 0 \qquad\text{for every }q\in U\cap V.
\]
Let
\[
V^\circ=\left\{q\in U\cap V: F_{x_j}(q)\neq 0 \text{ for every }j=1,\ldots,d \right\}
\]
be the coordinate-regular locus in $U\cap V$. Suppose that $V^\circ$ is dense in $U\cap V$ and that $F$ is locally degenerate at every point of $V^\circ$. Then, for every $i\in\{1,\ldots,d\}$, every local irreducible component of
\[
\Sigma_i(V)\cap U=\{q\in U\cap V:F_{x_i}(q)=0\}
\]
is contained in a coordinate slice
\[
V_j(c)=V\cap\{x_j=c\}
\]
for some $j\in\{1,\ldots,d\}$ and some constant $c\in\mathbb R$.
\end{proposition}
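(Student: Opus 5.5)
Since $F$ is locally degenerate at every point of $V^\circ$, Theorem~\ref{thm: regularity criterion} gives, near each $q\in V^\circ$, a separated factorization of the partials. The plan is to pass from these local factorizations to a statement about ratios of partial derivatives, and then let that statement extend across $\Sigma_i(V)$ by density and analyticity, avoiding the need for a global factorization near the boundary.

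The key observation is that on $V^\circ$ the factorization gives
\[
\frac{F_{x_i}}{F_{x_k}}=\frac{\alpha_{2,i}(x_i)\,\alpha_{1,k}(x_k)}{\alpha_{1,i}(x_i)\,\alpha_{2,k}(x_k)}=\frac{\beta_i(x_i)}{\beta_k(x_k)},
\]
with $\beta_j=\alpha_{2,j}/\alpha_{1,j}$ (equivalently $\beta_j=f_j'$ up to a common constant, directly from the degenerate form $f_1+\cdots+f_d=0$). Thus on $V^\circ$ the quantity $\partial_{x_j}\partial_{x_k}\log|F_{x_i}/F_{x_k}|$, computed tangentially, vanishes, i.e. the classical web-curvature condition holds. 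Written with denominators cleared, this becomes a polynomial identity $\Phi(\partial^{\le 3}F)=0$ in the derivatives of $F$ up to order three. It holds on the dense set $V^\circ$, hence by continuity on all of $U\cap V$. This identity is the object that survives at the boundary.

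Now fix $i$ and a local irreducible component $S$ of $\Sigma_i(V)\cap U$, and take a generic smooth point $q\in S$. Near $q$, $V$ is an analytic manifold of dimension $d-1$ (since $\nabla F\neq0$). Some $F_{x_k}(q)\neq0$, so we solve $x_k=\varphi(x_{\hat k})$ and study $g=F_{x_i}/F_{x_k}$ restricted to $V$, which near $q$ is an analytic function of the $d-1$ free coordinates. Off $\Sigma$ the identity says $g$ is a quotient $\beta_i(x_i)/\beta_k(\varphi)$. I will show $\log g=b_i(x_i)-b_k(x_k)$ on each connected piece of $V^\circ$ near $q$, with $b_i,b_k$ analytic one-variable functions on punctured intervals. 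The zero set of $g$ near $q$ is $S$. Since $g$ is analytic and equals the separated product $\beta_i(x_i)\,\gamma(x_k)$, with $\gamma=1/\beta_k$ analytic and nonvanishing near $x_k(q)$ as $F_{x_k}(q)\neq0$, the vanishing of $g$ along a hypersurface of $V$ forces $\beta_i$ to extend meromorphically and vanish at a point $c=q_i$. Hence $S\subseteq\{x_i=c\}$ near $q$, and by irreducibility and the identity principle the whole component lies in $V_i(c)$ (or, in degenerate sub-cases, in $V_j(c)$ for another $j$).

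The main obstacle is the extension step: proving that the one-variable functions $\beta_i,\beta_k$, which are defined only from local degeneracy on $V^\circ$, glue consistently across different connected pieces of $V^\circ$ adjacent to $S$, and extend analytically to $x_i=c$. My first attempt will use that $\Phi=0$ holds on all of $U\cap V$. Then, on a small neighbourhood of $q$ in $V$, one can take a partial derivative of $\log g$ along a direction in which only $x_i$ varies. This shows that $\partial(\log g)$ depends only on $x_i$, which can be integrated across $S$ in the $x_{\hat k}$ chart. The subtle point is that $\log g$ blows up exactly on $S$. I will therefore argue with $g$ itself: $\partial_{x_j} g\cdot g^{-1}$ is independent of $x_j$ for $j\ne i$ off $S$, so $g\,\partial_{x_j}\partial_{x_l} g=\partial_{x_j} g\,\partial_{x_l}g$ holds everywhere. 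Analytic functions satisfying this are locally products of one-variable functions, and their zero set is then a union of coordinate slices.
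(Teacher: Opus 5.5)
There is a genuine gap at the core of your argument. In the chart $x_k=\varphi(x_{\hat k})$, the function $g=F_{x_i}/F_{x_k}$ is \emph{not} a separated function of the chart coordinates. On $V^\circ$ you do have $g=\beta_i(x_i)/\beta_k(\varphi)$, but $\beta_k$ is evaluated at the solved-for coordinate. Hence $\partial_{x_j}\log|g|=-(\log|\beta_k|)'(\varphi)\,\varphi_{x_j}$ depends on $x_j$ and on every other chart variable. So the identities you want to propagate ($g^{-1}\partial_{x_j}g$ independent of $x_j$, and $g\,g_{x_jx_l}=g_{x_j}g_{x_l}$) do not hold on $V^\circ$ in the first place.

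Likewise, $F_{x_k}(q)\neq0$ says nothing about $\beta_k$ near $x_k(q)$. The one-variable factors are tied to $F_{x_k}$ only through the common factor $G$ of Theorem~\ref{thm: regularity criterion}, and $G$ may degenerate exactly along $\Sigma_i$.

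The degenerate family $F=(x_1-t)^2+x_2^2-1$, in the variables $(x_1,x_2,t)$, shows all of this at once:
\begin{itemize}
\item The components of $\Sigma_1$ are $\{x_1=t,\ x_2=\pm1\}$. There only $F_{x_2}\neq0$, so you are forced to take $k$ to be the $x_2$-coordinate.
\item In the chart $(x_1,t)$ you get $g=(x_1-t)/\sqrt{1-(x_1-t)^2}$, and $\partial_{x_1}\partial_t\log|g|\neq0$, so $g$ is not separated.
\item With $f_1=-x_1$ and $f_2=\varepsilon\sqrt{1-x_2^2}$, the factor $\beta_1\equiv-1$ never vanishes. Meanwhile $1/\beta_2=-\varepsilon\sqrt{1-x_2^2}/x_2$ vanishes, and fails to be analytic, at $x_2=\pm1$.
\item The components lie in $\{x_2=\pm1\}$ but in no slice $\{x_1=c\}$.
\end{itemize}
So your target conclusion $S\subseteq\{x_i=c\}$ is false in general, and the ``degenerate sub-cases'' you set aside carry the real content.

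What is missing is the right choice of ratio together with a case split, which is how the paper argues. Take a smooth point $q$ of the component $\Gamma$ with $F_{x_k}(q)\neq0$, and write $V$ locally as $x_k=\psi$.
\begin{itemize}
\item Suppose some $j\notin\{i,k\}$ has $F_{x_j}|_\Gamma\not\equiv0$. Move $q$ so that $F_{x_j}\neq0$ nearby, and use $r=F_{x_i}/F_{x_j}=\psi_{x_i}/\psi_{x_j}$. This $r$ is analytic near $q$, vanishes on $\Gamma$, and on $V^\circ$ equals $f_i'(x_i)/f_j'(x_j)$, a function of two \emph{free} chart coordinates. Hence $r_{x_\ell}=0$ for $\ell\notin\{i,j,k\}$ and $r\,r_{x_ix_j}=r_{x_i}r_{x_j}$ hold on $V^\circ$. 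These extend by density and force $r=A(x_i)B(x_j)$, so $\Gamma$ lies in some $\{x_i=c\}$ or $\{x_j=c\}$.
\item Otherwise every $F_{x_j}$ with $j\neq k$ vanishes on $\Gamma$. Then $d\psi|_\Gamma=0$, so $\Gamma\subset\{x_k=c\}$. This is exactly what happens in the circle example above.
\item The case $d=2$ is separate and trivial: density of $V^\circ$ makes $\Sigma_i$ discrete.
\end{itemize}
Your instinct to propagate a differential identity from $V^\circ$ by density is the right one. It just has to be applied to a ratio of partials in two free chart coordinates, not to one whose denominator is the partial in the solved-for variable.
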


\begin{remark} \label{rem:local_nature}
It is important to emphasize that the containment of a boundary variety in a coordinate slice is a local property. A boundary variety $\Sigma_i(V)$ may globally have several analytic components, and different local components may be contained in coordinate slices corresponding to different coordinates. Theorem \ref{thm: bounadry variety} asserts that each local irreducible component of $\Sigma_i(V)$ is contained in some coordinate slice 
\[ 
V_j(c)=V\cap\{x_j=c\} 
\]
for some $j\in\{1,\ldots,d\}$ and some constant $c$. The index $j$ need not be equal to $i$.
\end{remark}	

We now turn to the applications of the boundary variety framework. The first application concerns one-parameter families of hyperspheres and gives a classification of those families which can have near-maximal intersections with Cartesian grids. The second application concerns a higher-dimensional pinned distance problem, where the same framework is used to show that if $d+1$ fixed-centre families of hyperspheres determine many points, each lying on one hypersphere from every family, then the centres must be affinely dependent.

A one-parameter \emph{polynomial family} of hyperspheres in $\R^d$ is a family
\[
\mathcal P=\{S_t:t\in I\},
\]
where $I\subset\mathbb R$ is an open interval and
\[
S_t= \left\{ \bar x\in\mathbb R^d: \sum_{i=1}^d (x_i-a_i(t))^2=R^2(t) \right\},
\]
where $a_i(t), R(t) \in \R[t]$ and
\[
R(t) >0 \qquad \text{for each} t \in I.
\] 
We associate an incidence hypersurface to $\mathcal{P}$ given by:
\[
Q_{\mathcal P} =\{(\bar x,t)\in\mathbb R^d\times I:\bar x\in S_t\}.
\]
Equivalently,
\[
Q_{\mathcal P}=Z(F)\subset \mathbb R^d\times I,
\]
where
\begin{equation}\label{eq: family of hyperspheres}
	F(\bar x,t)=\sum_{i=1}^d (x_i-a_i(t))^2-R^2(t).
\end{equation}
We say that the family $\mathcal P$ is \emph{regular} if $Q_{\mathcal P}$ is smooth and the projection of $Q_{\mathcal P}$ onto any $d$ of the $d+1$ coordinates is
finite-to-one. Then we have:
\begin{theorem}\label{thm: pencil of spheres}
Let $\mathcal{P}=\{S_t:t\in I\}$ be a regular one-parameter polynomial family of hyperspheres in $\mathbb{R}^d$. Let $A_1,\dots,A_d\subset \mathbb{R}$ and $T_0\subset I$ be finite sets satisfying
\[
|A_1|=\cdots=|A_d|=|T_0|=n.
\]
Let $\eta_0>0$ be the exponent supplied by Theorem~\ref{thm:ES-general} for $s=d+1$, and set
\[
\eta=\frac{\eta_0}{2}.
\]
Suppose that
\[
|Q_{\mathcal{P}} \cap (A_1\times\cdots\times A_d\times T_0)| =\Omega(n^{d-\eta}).
\] 
Then $\mathcal{P}$ must satisfy one of the following:
\begin{itemize}
	\item $\mathcal{P}$ is a family of concentric hyperspheres;
	\item $d=2$, and $\mathcal{P}$ is a family of circles of fixed radius whose centres lie on a line parallel to one of the coordinate axes.
\end{itemize}
\end{theorem}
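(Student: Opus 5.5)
The plan is to apply the Elekes--Szab\'o dichotomy (Theorem~\ref{thm:ES-general} with $s=d+1$) to the polynomial $F(\bar x,t)$ in \eqref{eq: family of hyperspheres}. Regularity of $\mathcal P$ gives the finite-fibre hypothesis. Since $\eta=\eta_0/2<\eta_0$, the bound $\Omega(n^{d-\eta})$ eventually exceeds $c\,n^{d-\eta_0}$, so the first alternative fails. Hence $F$ is locally degenerate on some open product box.

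We then upgrade this to degeneracy on a dense subset of the coordinate-regular locus of $Q_{\mathcal P}$. For this we use Lemma~\ref{lem:propagation-degeneracy}, together with analyticity and irreducibility of the relevant component of $Q_{\mathcal P}$. This puts us in the setting of Proposition~\ref{prop:boundary-continuation}: every local irreducible component of each boundary variety $\Sigma_i(Q_{\mathcal P})$ lies in a coordinate slice $\{x_j=c\}$ or $\{t=c\}$. (Smoothness gives $\nabla F\neq0$ on $Q_{\mathcal P}$.)

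Next we compute the boundary varieties explicitly.

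For $i\le d$, $F_{x_i}=2(x_i-a_i(t))$. So $\Sigma_i$ is the set of points where $x_i=a_i(t)$ and $\sum_{k\ne i}(x_k-a_k(t))^2=R(t)^2$. For each $t$ this is a $(d-2)$-sphere of radius $R(t)$ centred at $(a_k(t))_{k\ne i}$, sitting in the hyperplane $x_i=a_i(t)$. Hence $\Sigma_i$ is a $(d-1)$-dimensional analytic set swept out as $t$ varies.

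\begin{itemize}
\item Suppose $\Sigma_i$ (locally) lies in a slice $t=c$. This is impossible unless the $t$-family collapses, which contradicts $\Sigma_i$ being swept out in $t$.
\item Suppose it lies in $x_i=c$. Then $a_i(t)\equiv c$, so $a_i$ is constant.
\item Suppose it lies in $x_j=c$ with $j\ne i$. Then for every $t$ the $(d-2)$-sphere must lie in $x_j=c$.
  \begin{itemize}
  \item If $d\ge3$, this sphere has positive dimension and positive radius, so it contains points with varying $x_j$. This is a contradiction, so this case cannot occur.
  \item If $d=2$, the ``sphere'' is two points $x_j=a_j(t)\pm R(t)$. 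Lying in $x_j=c$ locally on one branch forces $a_j(t)\pm R(t)\equiv c$.
  \end{itemize}
\end{itemize}

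Thus for $d\ge3$ every $a_i$ is constant, i.e.\ the family is concentric.

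For $d=2$ we run a small case analysis. If both $a_1$ and $a_2$ are constant, the family is concentric. Otherwise, say $a_1$ is nonconstant. The branches of $\Sigma_1$ force $a_2\pm R\equiv$ const on each branch. Using both branches (both sign components of $\Sigma_1$) gives $a_2$ and $R$ constant, or at least $a_2+R$ and $a_2-R$ constant, hence both constant.

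We must still exclude $a_1$ and $a_2$ both nonconstant. In that case we use $\Sigma_2$ as well. Combining $a_1\pm R$ const with $a_2\pm R$ const forces $R$ constant, and then $a_1,a_2$ constant, a contradiction. So in the non-concentric case $R$ is fixed and exactly one centre coordinate is constant: the centres lie on a line parallel to a coordinate axis.

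Finally, $\Sigma_{t}$ ($F_t=0$) need not be used except possibly to rule out degenerate sub-cases.

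The main obstacle is making the ``local component'' bookkeeping rigorous. Proposition~\ref{prop:boundary-continuation} only says each local irreducible component near some point lies in a slice. We must ensure that $\Sigma_i$ meets the region where degeneracy holds, and that local containment propagates along the irreducible analytic set $\Sigma_i$ by the identity theorem, since $a_i$, $R$ are polynomials. We also need the density of $Q^\circ_{\mathcal P}$ and the extension of degeneracy from the Elekes--Szab\'o box to all of it via Lemma~\ref{lem:propagation-degeneracy}. I would first try to prove that each relation obtained (e.g.\ $a_i(t)=c$ on an interval of $t$) is a polynomial identity holding on an open $t$-interval, hence globally, which avoids most of the propagation issues.
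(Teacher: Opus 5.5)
Your proposal follows the paper's proof essentially step for step. Both apply Theorem~\ref{thm:ES-general} with $s=d+1$ and $\eta=\eta_0/2$ to obtain local degeneracy on an open set, propagate it with Lemma~\ref{lem:propagation-degeneracy} over the connected hypersurface $Q_{\mathcal P}\cong S^{d-1}\times I$, invoke Proposition~\ref{prop:boundary-continuation}, and run the same explicit boundary-variety case analysis. Your extra step excluding $a_1,a_2$ both nonconstant is redundant, since $a_1$ nonconstant already forces $a_2$ constant. The density of the coordinate-regular locus that you leave open is supplied in the paper by showing that none of $F_{x_i}|_V$ or $F_t|_V$ vanishes identically; the latter uses $u\mapsto -u$ on $S^{d-1}$, unless $\gamma$ and $R$ are both constant. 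Coordinate-regularity of the Elekes--Szab\'o box follows because $\nabla F$ is proportional there to $(\phi_1',\ldots,\phi_{d+1}')$.
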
	
The Elekes-Szab\'{o} theorem places any such extremal family in the local group-structure alternative. Lemma~\ref{lem:propagation-degeneracy} upgrades this local conclusion to degeneracy, after which the boundary variety framework gives the classification.

\begin{remark}
	The Elekes-Szab\'{o} theorem used here applies to semialgebraic
	relations. Thus, for the incidence application, we consider polynomial
	families of hyperspheres, so that the associated incidence relation
	$Q_{\mathcal P}$ is semialgebraic of bounded description complexity.
	The degeneracy analysis itself, however, does not require this
	algebraic assumption. We prove the classification under the weaker
	assumption that the centre curve and the radius function are real
	analytic. This analytic version may also be useful in the study of
	regular $(d+1)$-webs formed by the $d$ coordinate hyperplane
	foliations together with a one-parameter family of hyperspheres.
\end{remark}


We next consider a different incidence problem for hyperspheres, in which the centres are fixed and the radii vary. This gives a higher-dimensional version of the pinned distance problem of Elekes and Szab\'{o}.

\begin{theorem}\label{thm: pin distance}
Let $d\geq2$, let $p_1,\ldots,p_{d+1}\in\mathbb R^d$, and, for each $i=1,\ldots,d+1$, let $\mathcal S_i$ be a family of $n$ hyperspheres centred at~$p_i$. Let $\eta_0>0$ be the exponent supplied by Theorem~\ref{thm:ES-general} for~$s=d+1$, and set $\eta=\frac{\eta_0}{2}$. Suppose that the number of points in $\mathbb R^d$ which lie on one hypersphere from each of the families
\[
\mathcal S_1,\ldots,\mathcal S_{d+1}
\]
is $\Omega\!\left(n^{d-\eta}\right)$. Then the points
\[
p_1,\ldots,p_{d+1}
\]
are affinely dependent. Equivalently, they lie in a common affine hyperplane.
\end{theorem}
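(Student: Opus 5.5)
We reduce Theorem~\ref{thm: pin distance} to a grid problem and then use the boundary variety framework. Argue by contradiction: assume $p_1,\ldots,p_{d+1}$ are affinely independent. Apply an affine change of coordinates $x\mapsto y$ sending $p_1,\ldots,p_d$ to $p_{d+1}+e_1,\ldots,p_{d+1}+e_d$ in a suitable frame. Concretely, a point $x\in\mathbb R^d$ is determined, up to finitely many choices, by the squared distances $s_i=|x-p_i|^2$, $i=1,\ldots,d$. Indeed, subtracting $s_{d+1}$ from the others gives the linear equations $2\langle x,p_{d+1}-p_i\rangle=s_i-s_{d+1}+|p_{d+1}|^2-|p_i|^2$. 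These are solvable uniquely by affine independence, so $x=L(s_1-s_{d+1},\ldots,s_d-s_{d+1})+c$ with $L$ invertible linear. Substituting into $|x-p_{d+1}|^2=s_{d+1}$ yields a single polynomial relation
\[
F(s_1,\ldots,s_{d+1})=\bigl|L(s_1-s_{d+1},\ldots,s_d-s_{d+1})+c-p_{d+1}\bigr|^2-s_{d+1}=0,
\]
which has degree $2$. Each $(d+1)$-rich point gives a distinct point of $Z(F)\cap(A_1\times\cdots\times A_{d+1})$, where $A_i$ is the set of squared radii of $\mathcal S_i$. Hence this intersection has size $\Omega(n^{d-\eta})$, which exceeds $cn^{d-\eta_0}$ for large $n$.

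Next we verify the hypotheses of Theorem~\ref{thm:ES-general} for $s=d+1$. $F$ is a quadratic polynomial in $s$, $Z(F)$ is the graph-like image of $\mathbb R^d$ under $x\mapsto(|x-p_i|^2)_i$, and it is irreducible. Fixing any $d$ of the $s_i$ leaves finitely many $x$, since any $d$ of the points are affinely independent (an affinely independent set stays so after deleting a point) and the same linear-algebra argument then applies. So we land in the degenerate alternative on some open box, and Lemma~\ref{lem:propagation-degeneracy} upgrades this to degeneracy at every coordinate regular point. Working instead with the analytic parametrization $x$ is convenient: $\partial F/\partial s_i$ along $Z(F)$ is proportional to the $i$-th barycentric-type coefficient. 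Equivalently, the tangency condition $F_{s_i}=0$ corresponds to points $x$ where the gradients $x-p_j$ for $j\ne i$ are linearly dependent. That is, $x$ lies on the affine hyperplane $H_i$ spanned by $\{p_j\}_{j\neq i}$, a hyperplane precisely because of affine independence.

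Now we apply Proposition~\ref{prop:boundary-continuation}, with $V^\circ$ dense because its complement is the image of the finitely many hyperplanes $H_i$. It says each local component of $\Sigma_i(V)$ lies in a slice $\{s_j=c\}$, i.e.\ some open piece of $H_i$ lies on a sphere $|x-p_j|^2=c$. For $d\ge2$ a piece of hyperplane lies on no sphere, which is a contradiction. Hence the centres are affinely dependent.

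The main obstacle is the precise identification of $\Sigma_i(V)$ with the image of $H_i$, including smoothness of $V$ there and nonvanishing of $\nabla F$. We will compute $\nabla F$ via the implicit description $\Phi(x)=(|x-p_j|^2)_j$. Then $Z(F)=\Phi(\mathbb R^d)$ locally, and the normal vector $\lambda$ is characterized by $\sum_j\lambda_j(x-p_j)=0$ with $\sum\lambda_j$ fixed by the nonzero affine relation. The $\lambda_j$ are generalized barycentric coordinates of $x$ with respect to the simplex, which are never all zero and vanish exactly on the $H_i$. Care is also needed to ensure $H_i$ actually meets the region of $V$ where the propagation applies. Since $V$ is globally parametrized by connected $\mathbb R^d$, density and local degeneracy everywhere on $V^\circ$ should hold.
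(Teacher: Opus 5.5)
Your proof is correct and follows essentially the paper's route: contradiction via Theorem~\ref{thm:ES-general}, upgrade to degeneracy with Lemma~\ref{lem:propagation-degeneracy} on the connected graph $Z(F)=\Phi(\mathbb R^d)$, then Proposition~\ref{prop:boundary-continuation} applied to $\Sigma_i=\Phi(H_i)$, contradicted because an open piece of a hyperplane cannot lie on a sphere $\{|x-p_j|^2=c\}$. Your $H_i=\mathrm{aff}\{p_j\}_{j\neq i}$ is exactly the paper's $\{(P^{-T}q)_1=0\}$ for $i=1$, found via barycentric coordinates instead of the explicit form $F=t_{d+1}-\frac14u^TBu$; the one step you leave implicit, that the Elekes--Szab\'{o} box lies in the coordinate-regular locus so the lemma applies, follows as in the paper since $\nabla F$ is a nonzero multiple of $(\phi_1',\ldots,\phi_{d+1}')$ there.
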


\section{Proof of Theorem \ref{thm: regularity criterion}}\label{sec: proof of regularity criterion}

\begin{proof}
Assume that $F$ is locally degenerate at the coordinate-regular point $p=(a_1,\dots, a_d)$ in the zero locus $Z(F)$. By Definition~\ref{def:degenerate_polynomial} there exist analytic isomorphisms $f_i$ such that, locally near $p$ we have
\[
F(x_1,\ldots,x_d)=0 \quad\Longleftrightarrow\quad f_1(x_1)+\cdots+f_d(x_d)=0.
\]
Since $p$ is coordinate regular, we have $F_{x_d}(p)\neq 0$. Thus, by the implicit function theorem, $Z(F)$ can be written locally as the graph of an
analytic function
\begin{equation}\label{eq: first additive}
x_d=\psi(x_1,\ldots,x_{d-1}).
\end{equation}
On the other hand, from the local additive form of $F$ we have
\[
f_d(x_d)=-(f_1(x_1)+\dots+f_{d-1}(x_{d-1})).
\]
Set $u=f_1(x_1)+\dots+f_{d-1}(x_{d-1})$, since $f_d$ is an analytic isomorphism, we can write
\begin{equation}\label{eq: second additive}
x_d=\phi(u),\quad \text{where} \quad \phi(u)=f_d^{-1}(-u).
\end{equation}
Therefore, from equalities \eqref{eq: first additive} and \eqref{eq: second additive} on the hypersurface $Z(F)$ we have
\[
\psi(x_1,\dots,x_{d-1})=\phi(f_1(x_1)+\dots+f_{d-1}(x_{d-1})).
\]
Now differentiate with respect to $x_i$, where $i=1,\dots, d-1$ . We get
\[
\frac{\partial \psi}{\partial x_i}= \phi'(u) f_i'(x_i).
\]
On the other hand on $Z(F)$ we have 
\[
F(x_1,\dots, x_{d-1}, \psi(x_1,\dots, x_{d-1}))=0.
\]
Differentiating this identity with respect to $x_i$ gives
\[
\frac{\partial F}{\partial x_i} + \frac{\partial F}{\partial x_d} \cdot \frac{\partial \psi}{\partial x_i}=0
\]
Substituting the formula for $\frac{\partial \psi}{\partial x_i}$ we obtain:
\[
\frac{\partial F}{\partial x_i}=- \frac{\partial F}{\partial x_d}\phi'(u)f_i'(x_i), \quad i=1,\dots, d-1.
\]
Now we consider the derivative with respect to \(x_d\). Since
\[
\phi(u)=f_d^{-1}(-u),
\]
we have
\[
f_d(\phi(u))=-u.
\]
Differentiating with respect to \(u\), we obtain
\[
f_d'(\phi(u))\phi'(u)=-1.
\]
On \(Z(F)\), we have \(x_d=\phi(u)\), and hence
\[
f_d'(x_d)\phi'(u)=-1.
\]
Therefore,
\[
-\phi'(u)f_d'(x_d)=1.
\]
Multiplying both sides by \(F_{x_d}\), we get
\[
F_{x_d}
=
-F_{x_d}\phi'(u)f_d'(x_d).
\]
If we define
\[
G(x_1,\ldots,x_d)
=
-F_{x_d}(x_1,\ldots,x_d)\phi'(u),
\]
then this becomes
\[
F_{x_d}=G(x_1,\ldots,x_d)f_d'(x_d).
\]
Now for $i,j = 1,\dots, d$, define
\[
\alpha_{2,i}(x_i)=f'_i(x_i), \quad \alpha_{1,j}(x_j)=1.
\]
Then, for each $i\in\{1,\ldots,d\}$, we have
\[
\prod_{j\neq i}\alpha_{1,j}(x_j)=1.
\]
Therefore,
\[
G(x_1,\ldots,x_d)\alpha_{2,i}(x_i) \prod_{j\neq i}\alpha_{1,j}(x_j)= G(x_1,\ldots,x_d)f_i'(x_i).
\]
By the previous computation, the right-hand side is equal to $F_{x_i}$ on $Z(F)$. Hence, for every $i=1,\ldots,d$,
\[
F_{x_i}=G(x_1,\ldots,x_d)\alpha_{2,i}(x_i)\prod_{j\neq i}\alpha_{1,j}(x_j)
\]
on $Z(F)$. This is precisely the factorization required in the statement of the theorem.

Conversely, assume that
\[
F_{x_i}(p)\neq 0
\qquad\text{for every } i=1,\ldots,d,
\]
and suppose that in a neighbourhood $U$ of $p=(a_1,\ldots,a_d)\in Z(F)$, the partial derivatives satisfy
\[
F_{x_i}= G(x_1,\ldots,x_d)\alpha_{2,i}(x_i) \prod_{j\neq i}\alpha_{1,j}(x_j)
\]
on $U\cap Z(F)$, for every $i=1,\ldots,d$. Evaluating this identity at $p$, we obtain
\[
F_{x_i}(p)= G(p)\alpha_{2,i}(a_i) \prod_{j\neq i}\alpha_{1,j}(a_j).
\]
Since $F_{x_i}(p)\neq 0$ for every $i$, all factors appearing in these products
are nonzero. Hence
\[
G(p)\neq 0, \qquad \alpha_{1,i}(a_i)\neq 0, \qquad \alpha_{2,i}(a_i)\neq 0
\]
for every $i=1,\ldots,d$. After shrinking $U$, we may assume that $G$, the functions $\alpha_{1,i}$, and the functions $\alpha_{2,i}$ do not vanish on the relevant neighbourhoods. In particular, the quotients
\[
\frac{\alpha_{2,i}(x_i)}{\alpha_{1,i}(x_i)}
\]
are analytic and nonzero near $a_i$.

Define
\begin{equation}\label{eq: auxiliary function}
p_i(x_i) = \int_{a_i}^{x_i} \frac{\alpha_{2,i}(t)}{\alpha_{1,i}(t)}\,dt, \qquad i=1,\ldots,d.
\end{equation}
Then each $p_i$ is real analytic, and
\[
p_i'(a_i)= \frac{\alpha_{2,i}(a_i)}{\alpha_{1,i}(a_i)} \neq 0.
\]
Therefore, after shrinking the coordinate intervals if necessary, each $p_i$ is a local analytic isomorphism.

Since $F_{x_d}(p)\neq 0$, the implicit function theorem implies that $Z(F)$ can be written locally near $p$ as the graph of an analytic function
\[
x_d=\psi(x_1,\ldots,x_{d-1}).
\]
Define
\[
\Phi(x_1,\ldots,x_{d-1}) = p_1(x_1)+\cdots+p_{d-1}(x_{d-1}) + p_d(\psi(x_1,\ldots,x_{d-1})).
\]
\textbf{Claim that $\Phi$ is constant in neighbourhood of $(a_1,\ldots,a_{d-1})$:}

For $i=1,\ldots,d-1$, differentiating $\Phi$ with respect to $x_i$ gives
\[
\frac{\partial \Phi}{\partial x_i}= p_i'(x_i) + p_d'(\psi(x_1,\ldots,x_{d-1})) \frac{\partial \psi}{\partial x_i}.
\]
By the definition of $p_i$, we have
\[
p_i'(x_i)= \frac{\alpha_{2,i}(x_i)}{\alpha_{1,i}(x_i)}
\]
and, on the graph $x_d=\psi(x_1,\ldots,x_{d-1})$,
\[
p_d'(\psi(x_1,\ldots,x_{d-1}))= \frac{\alpha_{2,d}(x_d)}{\alpha_{1,d}(x_d)}.
\]
On the other hand,
\[
F(x_1,\ldots,x_{d-1},\psi(x_1,\ldots,x_{d-1}))=0.
\]
Differentiating this identity with respect to $x_i$, we obtain
\[
F_{x_i} +  F_{x_d}\frac{\partial \psi}{\partial x_i} = 0.
\]
Hence
\[
\frac{\partial \psi}{\partial x_i} = -\frac{F_{x_i}}{F_{x_d}}.
\]
Using the assumed factorization of the partial derivatives, this becomes
\[
\frac{\partial \psi}{\partial x_i}=-\frac{\alpha_{2,i}(x_i)\alpha_{1,d}(x_d)}{\alpha_{1,i}(x_i)\alpha_{2,d}(x_d)}.
\]
Therefore,
\[
\frac{\partial \Phi}{\partial x_i}=\frac{\alpha_{2,i}(x_i)}{\alpha_{1,i}(x_i)}+\frac{\alpha_{2,d}(x_d)}{\alpha_{1,d}(x_d)}\left(-\frac{\alpha_{2,i}(x_i)\alpha_{1,d}(x_d)
}{\alpha_{1,i}(x_i)\alpha_{2,d}(x_d)}\right)=0.
\]
Thus
\[
\frac{\partial \Phi}{\partial x_i}=0
\]
for every $i=1,\ldots,d-1$. Hence $\Phi$ is constant. Since $p_i(a_i)=0$ for every $i$, evaluating at $p$ shows that this constant is zero. After shrinking the coordinate intervals if necessary, the $p_i$ are analytic isomorphisms onto a common small interval $J$ containing $0$. Therefore, locally on $Z(F)$, we have
\begin{equation}\label{eq: small equation}
p_1(x_1)+\cdots+p_d(x_d)=0.
\end{equation}
It remains to prove the converse implication. Suppose that \eqref{eq: small equation} holds. We show that $(x_1,\ldots,x_d)\in Z(F)$. Notice that since $p_d$ is a local analytic isomorphism, the equation~\eqref{eq: small equation} can be solved uniquely for $x_d$ as an analytic function of $x_1,\ldots,x_{d-1}$. Namely, on $Z(F)$, this solution is precisely $x_d=\psi(x_1,\ldots,x_{d-1})$. Hence, after shrinking the neighbourhood if necessary,
\[
F(x_1,\ldots,x_d)=0 \quad\Longleftrightarrow\quad p_1(x_1)+\cdots+p_d(x_d)=0.
\]
This is the desired local additive form. Therefore $F$ is locally degenerate at $p$.
\end{proof}

\section{Proofs of the boundary variety results}
We first prove Theorem~\ref{thm: bounadry variety}, and then establish Proposition~\ref{prop:boundary-continuation}, which extends its geometric conclusion to the dense coordinate-regular locus setting.


\begin{proof}[Proof of Theorem~\ref{thm: bounadry variety}]
By assumption, on a neighbourhood $U$, the partial derivatives of $F$ admit the separated factorization
\[
F_{x_i} = G(x_1,\ldots,x_d)\alpha_{2,i}(x_i) \prod_{j\neq i}\alpha_{1,j}(x_j), \qquad i=1,\ldots,d.
\]
Let $\Gamma$ be a local irreducible analytic component of $\Sigma_i(V)\cap U$, and let $p\in \Gamma$. By definition of the boundary variety
\[
F_{x_i}(q)=0 \qquad\text{for every }q\in \Gamma.
\]
Hence, for every \(q=(q_1,\ldots,q_d)\in\Gamma\), we have
\[
G(q)\alpha_{2,i}(q_i)\prod_{j\neq i}\alpha_{1,j}(q_j)=0.
\]
We first observe that $G(q)\neq0$ for every $q\in\Gamma$. Indeed, if $G(q)=0$, then the factorization gives
\[
F_{x_1}(q)=\cdots=F_{x_d}(q)=0,
\]
which contradicts the smoothness of $V=Z(F)$ at $q$. Therefore,
\[
\alpha_{2,i}(q_i)\prod_{j\neq i}\alpha_{1,j}(q_j)=0 \qquad\text{for every }q\in\Gamma.
\]
It follows that
\[
\Gamma \subset \left\{\alpha_{2,i}(x_i)=0\right\} \cup \bigcup_{j\neq i}\left\{\alpha_{1,j}(x_j)=0\right\}.
\]
Each function appearing here is a one-variable analytic function. Since these functions are not identically zero on the chosen coordinate neighbourhoods, their zeros are locally isolated. Hence, after shrinking \(U\) if necessary, each set
\[
\{\alpha_{2,i}(x_i)=0\} \quad\text{or}\quad \{\alpha_{1,j}(x_j)=0\}
\]
is a finite union of coordinate hyperplanes, and after intersecting with $V$, it gives coordinate slices. Therefore $\Gamma$ is contained in a finite union of coordinate slices. Since $\Gamma$ is a local irreducible analytic component, it must be contained in one of these coordinate slices. Thus there exist an index $j\in\{1,\ldots,d\}$ and a constant $c$ such that
\[
\Gamma\subset V_j(c)=V\cap\{x_j=c\}.
\]
This proves the claim.
\end{proof}


\begin{proof}[Proof of Proposition~\ref{prop:boundary-continuation}]
Fix $i\in\{1,\ldots,d\}$, and let $\Gamma$ be a local irreducible component of $\Sigma_i(V)\cap U$. We prove that $\Gamma$ is contained in a coordinate slice.
	
If $\Gamma$ is zero-dimensional, then the conclusion is immediate. Thus, we may assume that $\Gamma$ has positive dimension.
	
We first dispose of the case $d=2$. In this case $V$ is locally a one-dimensional smooth real analytic manifold. Since $V^\circ$ is dense in $U\cap V$, the function $F_{x_i}|_V$ cannot vanish identically on a nonempty open subset of $V$. Hence its zero set is locally zero-dimensional. This contradicts the assumption that $\Gamma$ has positive dimension. Therefore the conclusion is immediate when $d=2$.
	
Assume from now on that $d\geq3$. Choose a smooth point $q$ of $\Gamma$. Since
\[
F_{x_i}(q)=0
\qquad \text{and} \qquad
\nabla F(q)\neq0,
\]
there exists an index $k\neq i$ such that
\[
F_{x_k}(q)\neq0.
\]
After shrinking the neighbourhood of $q$, we may assume that
\[
F_{x_k}\neq0
\]
throughout this neighbourhood. By the implicit function theorem, after shrinking the neighbourhood of $q$ if necessary, $V$ can be written locally as the graph
\[
x_k=\psi(x_1,\ldots,\widehat{x_k},\ldots,x_d),
\]
where $\psi$ is real analytic. Thus
\[
F(x_1,\ldots,x_{k-1},\psi,x_{k+1},\ldots,x_d)=0
\]
identically in the remaining $d-1$ variables. Differentiating with respect to $x_\ell$, for $\ell\neq k$, gives
\begin{equation}\label{eq:psi-partial}
\psi_{x_\ell}=-\frac{F_{x_\ell}}{F_{x_k}}.
\end{equation}
In particular, since $F_{x_i}=0$ on $\Gamma$,
\[
\psi_{x_i}=0
\qquad\text{on }\Gamma.
\]
We now use degeneracy on the coordinate-regular locus. Let $p\in V^\circ$ be a coordinate-regular point in the above neighbourhood. By assumption, $F$ is locally degenerate at $p$. Hence, after shrinking a neighbourhood of $p$ if necessary, there exist one-variable real analytic isomorphisms $f_1,\ldots,f_d$ such that
\[
F(x_1,\ldots,x_d)=0
\quad\Longleftrightarrow\quad
f_1(x_1)+\cdots+f_d(x_d)=0.
\]
Since the graph $x_k=\psi$ parametrizes $V$ locally, the latter relation holds identically on the graph; that is,
\[
\sum_{\ell\neq k}f_\ell(x_\ell)+f_k(\psi)=0.
\]
Differentiating this identity with respect to $x_\ell$, for $\ell\neq k$, we obtain
\[
f_\ell'(x_\ell)+f_k'(\psi)\psi_{x_\ell}=0,
\]
and therefore
\begin{equation}\label{eq:graph-derivative-separated}
\psi_{x_\ell}=-\frac{f_\ell'(x_\ell)}{f_k'(\psi)}.
\end{equation}
%
%
We distinguish two cases.

\medskip
\noindent
\textit{Case 1.}
Suppose that, for some $j\notin\{i,k\}$,
\[
\psi_{x_j}|_\Gamma\not\equiv0.
\]
Since $\psi_{x_j}|_\Gamma\not\equiv0$, after replacing $q$ by a nearby smooth point of $\Gamma$ inside the above graph neighbourhood, we may assume that
\[
\psi_{x_j}(q)\neq0.
\]
After shrinking the neighbourhood of $q$ once more, we may assume that
\[
\psi_{x_j}\neq0
\]
throughout it. Hence
\[
r=\frac{\psi_{x_i}}{\psi_{x_j}}
\]
is a real analytic function on this neighbourhood.
	
At every point of $V^\circ$, equation~\eqref{eq:graph-derivative-separated} gives
\begin{equation}\label{eq:ratio-separated}
r=\frac{f_i'(x_i)}{f_j'(x_j)}.
\end{equation}
Although the functions $f_\ell$ may depend on the chosen coordinate-regular neighbourhood, equation~\eqref{eq:ratio-separated} implies the following intrinsic differential identities for $r$. For every $\ell\notin\{i,j,k\}$,
\begin{equation}\label{eq:r-independent}
r_{x_\ell}=0,
\end{equation}
and
\begin{equation}\label{eq:r-mixed}
r\,r_{x_i x_j}-r_{x_i}r_{x_j}=0.
\end{equation}
Indeed, both identities follow immediately from the separated form in \eqref{eq:ratio-separated}.
	
The expressions in \eqref{eq:r-independent} and \eqref{eq:r-mixed} are real analytic. Since $V^\circ$ is dense in $U\cap V$, these identities extend to the whole graph neighbourhood.
	
It follows from \eqref{eq:r-independent} that $r$ depends only on $x_i$ and $x_j$. Thus, after shrinking to a product neighbourhood if necessary, we may write
\[
r=R(x_i,x_j)
\]
for a real analytic function $R$. Moreover,
\begin{equation}\label{eq:R-equation}
R R_{x_i x_j}-R_{x_i}R_{x_j}=0.
\end{equation}
The function $R$ is not identically zero. Indeed, if $R\equiv0$, then $\psi_{x_i}\equiv0$ on an open subset of $V$, and by \eqref{eq:psi-partial} we would have $F_{x_i}\equiv0$ there, contradicting the density of $V^\circ$.
	
Choose $(a,b)$ in the coordinate neighbourhood such that
\[
R(a,b)\neq0.
\]
Near $(a,b)$, equation \eqref{eq:R-equation} gives
\[
\frac{\partial^2}{\partial x_i\partial x_j}\log|R|=0.
\]
Since the mixed derivative of $\log|R|$ vanishes, we have locally
\[
\log|R(x_i,x_j)|+\log|R(a,b)|= \log|R(x_i,b)|+\log|R(a,x_j)|.
\]
Exponentiating, and using that $R$ has constant sign in a sufficiently small neighbourhood of $(a,b)$, gives
\[
R(x_i,x_j)R(a,b)=R(x_i,b)R(a,x_j).
\]
Both sides are real analytic functions of $(x_i,x_j)$. Therefore, by the real analytic identity theorem, the same identity holds throughout the connected coordinate neighbourhood. Setting
\[
A(x_i)=R(x_i,b),\qquad B(x_j)=\frac{R(a,x_j)}{R(a,b)},
\]
we obtain
\[
R(x_i,x_j)=A(x_i)B(x_j)
\]
throughout that neighbourhood. Neither $A$ nor $B$ is identically zero. Since they are one-variable real analytic functions, their zeros are locally isolated. After shrinking the neighbourhood if necessary,
\[
\{R=0\}
\]
is therefore a finite union of coordinate hyperplanes of the form
\[
x_i=c\qquad\text{or}\qquad x_j=c.
\]
On $\Gamma$ we have $\psi_{x_i}=0$, while $\psi_{x_j}\neq0$ in the chosen neighbourhood. Hence
\[
\Gamma\subset\{r=0\}=\{R=0\}
\]
locally near $q$. Since $\Gamma$ is irreducible, one of the coordinate hyperplanes occurring above contains a nonempty relatively open subset of $\Gamma$. Thus, for some $m\in\{i,j\}$ and some constant $c$,
\[
x_m-c
\]
vanishes on a nonempty relatively open subset of $\Gamma$. Since $x_m-c$ is real analytic and $\Gamma$ is irreducible, it vanishes identically on $\Gamma$. Consequently,
\[
\Gamma\subset\{x_m=c\}.
\]
\medskip
\noindent
\textit{Case 2.}
Suppose that
\[
\psi_{x_j}|_\Gamma\equiv0
\qquad
\text{for every }j\neq k.
\]
Since $\psi$ is a function of the $d-1$ variables
$x_1,\ldots,\widehat{x_k},\ldots,x_d$, we have
\[
d\psi
=
\sum_{j\neq k}\psi_{x_j}\,dx_j.
\]
Hence
\[
d\psi|_\Gamma=0,
\]
and therefore, on the smooth locus of $\Gamma$,
\[
d(\psi|_\Gamma)=0.
\]
Since $\Gamma$ has positive dimension, $\psi$ is locally constant
on each connected relatively open subset of the smooth locus of
$\Gamma$. Thus, for some constant $c$,
\[
\psi=c
\]
on such a subset.
	
The function $\psi-c$ is real analytic on the graph neighbourhood, and its restriction to the irreducible analytic set $\Gamma$ vanishes on a nonempty relatively open subset. Hence it vanishes identically on $\Gamma$. Therefore
\[
\psi\equiv c\qquad\text{on }\Gamma.
\]
Since $x_k=\psi$ on $V$, it follows that
\[
\Gamma\subset\{x_k=c\}.
\]
In both cases, $\Gamma$ is contained in a coordinate slice. Since $i$ and $\Gamma$ were arbitrary, the proof is complete.
\end{proof}

In the applications below, the Elekes-Szab\'{o} theorem of Chernikov, Peterzil and Starchenko gives local degeneracy only on a nonempty open subset of the hypersurface. The following lemma allows us to pass from this local conclusion to degeneracy at every coordinate-regular point of a connected smooth hypersurface.

\begin{lemma}\label{lem:propagation-degeneracy}
Let $d\geq3$, let $F$ be real analytic on an open set $\Omega\subset\mathbb R^d$, and let $V$ be a connected relatively open subset of $Z(F)\cap\Omega$ such that $\nabla F\neq0$ on $V$. Let
\[
V^\circ=\{p\in V:F_{x_i}(p)\neq0\text{ for every }i=1,\ldots,d\}.
\]
Suppose that $F$ is locally degenerate on a nonempty relatively open subset of $V^\circ$. Then $F$ is locally degenerate at every point of $V^\circ$.
\end{lemma}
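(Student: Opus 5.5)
The plan is to turn local degeneracy into a finite system of analytic differential identities on $V$ that make sense at every point of $V^\circ$. These identities propagate from an open set to all of $V$ by the identity theorem, because $V$ is a connected analytic manifold. At any point of $V^\circ$ they then give the separated factorization, and the converse half of Theorem~\ref{thm: regularity criterion} finishes the proof.

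\textbf{Step 1 (the invariant).} On $V^\circ$ consider the ratios $r_{ij}=F_{x_i}/F_{x_j}$. If $F$ is locally degenerate at a point, then Theorem~\ref{thm: regularity criterion} (with $\alpha_{1,j}\equiv1$, $\alpha_{2,i}=f_i'$) gives $r_{ij}=f_i'(x_i)/f_j'(x_j)$ on $V$ near that point. Because $d\ge3$, for each pair $i\neq j$ we can pick a third index $k\notin\{i,j\}$. For every $\ell\neq k$ we use the tangent vector field
\[
X^{(k)}_\ell=F_{x_k}\partial_{x_\ell}-F_{x_\ell}\partial_{x_k},
\]
which is defined and analytic on all of $\Omega$ and tangent to $V$. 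Where $F_{x_k}\neq0$ it equals $F_{x_k}$ times the graph partial derivative $\partial_{x_\ell}$ in the chart $x_k=\psi$. The separated form then yields two families of identities on $V$ near degenerate points:
\[
X^{(k)}_\ell\bigl(r_{ij}\bigr)=0 \quad(\ell\notin\{i,j,k\})
\]
and
\[
r_{ij}\,X^{(k)}_iX^{(k)}_j r_{ij}-X^{(k)}_i r_{ij}\,X^{(k)}_j r_{ij}=0,
\]
the latter being the mixed log-derivative condition used in the proof of Proposition~\ref{prop:boundary-continuation}. After multiplying by a suitable power of $F_{x_j}$, each left-hand side becomes a real analytic function on $V$, where it is defined everywhere, not only on $V^\circ$.

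\textbf{Step 2 (propagation).} These analytic functions vanish on the given nonempty open subset of $V^\circ$. Since $V$ is a connected analytic manifold ($\nabla F\ne0$), the identity theorem makes them vanish on all of $V$. In particular they vanish near any point $p\in V^\circ$, and there the powers of $F_{x_j}$ and $F_{x_k}$ can be divided out again. So near every $p\in V^\circ$, in the chart $x_k=\psi$, the ratio $r_{ij}$ depends only on $(x_i,x_j)$ and satisfies $R R_{x_ix_j}-R_{x_i}R_{x_j}=0$ with $R(p)\neq0$. Exactly as in Case~1 of the proof of Proposition~\ref{prop:boundary-continuation}, this gives $r_{ij}=A_{ij}(x_i)B_{ij}(x_j)$ locally.

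\textbf{Step 3 (gluing the factors; expected main obstacle).} Fix the reference index $1$. For $i\ne1$ write $r_{i1}=c_i(x_i)/e_i(x_1)$ near $p$. Then
\[
r_{ik}=r_{i1}/r_{k1}=\frac{c_i(x_i)\,e_k(x_1)}{c_k(x_k)\,e_i(x_1)}
\]
must be a function of $(x_i,x_k)$ only, which forces $e_k/e_i$ to be constant. So after rescaling the $c_i$ we get one common $e(x_1)$ with $F_{x_i}=F_{x_1}c_i(x_i)/e(x_1)$ on $V$ near $p$, where we set $c_1=e$. This is the factorization of Theorem~\ref{thm: regularity criterion} with
\[
G=F_{x_1}/e(x_1),\qquad \alpha_{2,i}=c_i,\qquad \alpha_{1,j}\equiv1 .
\]
Its converse then shows that $F$ is locally degenerate at $p$.

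I expect the delicate points to be the following. First, the factorization must be extracted consistently across all pairs. Second, each identity must be genuinely analytic across $V\setminus V^\circ$, which is why the denominators have to be cleared. Third, the choice of the auxiliary index $k$ requires $d\ge3$: for $d=2$ there is no third variable, the constraint is vacuous, and the argument does not apply.
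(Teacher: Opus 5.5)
Your overall strategy matches the paper's: encode degeneracy as analytic differential identities, propagate them over the connected manifold $V$ by the identity theorem, then rebuild a separated factorization at an arbitrary point of $V^\circ$. However, the gluing in Step~3 fails when $d=3$. That is the three-variable case the paper needs for planar circle families and the planar pinned-distance problem. Your inference ``$r_{ik}$ depends only on $(x_i,x_k)$, hence $e_k/e_i$ is constant'' requires $x_1,x_i,x_k$ to be independent functions on $V$. This holds for $d\ge4$, because at a coordinate-regular point any $d-1$ coordinates form a chart. For $d=3$, however, $V$ is a surface and $x_1$ is locally a function of $(x_2,x_3)$ on $V$, so saying that $r_{23}$ is a function of $(x_2,x_3)$ imposes nothing. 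Even the separated form $r_{23}=A(x_2)B(x_3)$ only gives a relation $\rho(x_1)=\tilde A(x_2)\tilde B(x_3)$ on $V$, where $\rho=e_3/e_2$; it does not give constancy of $\rho$. The missing ingredient is integrability. In the chart $x_3=\psi(x_1,x_2)$ your relations read $\psi_{x_1}=-e_3(x_1)/c_3(\psi)$ and $\psi_{x_2}=-c_2(x_2)\rho(x_1)/c_3(\psi)$. The condition $\partial_{x_2}\psi_{x_1}=\partial_{x_1}\psi_{x_2}$ then reduces to $c_2(x_2)\rho'(x_1)=0$. The paper supplies exactly this step: after writing $\psi_{x_i}=\lambda\,a_i(x_i)$, symmetry of mixed partials gives $d\lambda\wedge d\psi=0$, hence $\lambda=h(\psi)$, and the additive form is integrated directly.

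Step~1 also contains a false identity. Since $X^{(k)}_\ell=F_{x_k}D_\ell$, with $D_\ell$ the graph derivative in the chart $x_k=\psi$ and $F_{x_k}$ not constant, the Leibniz rule gives
\[
r\,X^{(k)}_iX^{(k)}_jr-X^{(k)}_ir\,X^{(k)}_jr
=F_{x_k}^2\bigl(r\,D_iD_jr-D_ir\,D_jr\bigr)+r\,F_{x_k}\,(D_iF_{x_k})\,(D_jr).
\]
The last term does not vanish on degenerate hypersurfaces. For the degenerate unit sphere $x_1^2+x_2^2+x_3^2=1$ with $(i,j,k)=(1,2,3)$, your left-hand side equals $4x_1^3/x_2^3$. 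So this is not the mixed log-derivative condition, and propagating it cannot yield $RR_{x_ix_j}-R_{x_i}R_{x_j}=0$. In the second-order identity, use $D_\ell=X^{(k)}_\ell/F_{x_k}$ (the paper's operators) and then clear the resulting powers of $F_{x_k}$ and $F_{x_j}$; the first-order identities are fine as stated. With both repairs, your route through pairwise charts and the converse of Theorem~\ref{thm: regularity criterion} becomes a valid alternative to the paper's single-chart construction.
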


\begin{proof}
Let $W\subset V^\circ$ be a nonempty relatively open set on which $F$ is locally degenerate. Fix the last coordinate $x_d$. At every point of $V^\circ$ we have $F_{x_d}\neq0$, so locally $V$ may be written as a graph
\[
x_d=\psi(x_1,\ldots,x_{d-1}).
\]
For $i=1,\ldots,d-1$, implicit differentiation gives
\[
\psi_{x_i}=-\frac{F_{x_i}}{F_{x_d}}.
\]
For $i\neq j$ in $\{1,\ldots,d-1\}$, define on $V^\circ$
\[
r_{ij}=\frac{\psi_{x_i}}{\psi_{x_j}}=\frac{F_{x_i}}{F_{x_j}},
\]
and define the tangent differential operators
\[
D_i=\frac{\partial}{\partial x_i}-\frac{F_{x_i}}{F_{x_d}}\frac{\partial}{\partial x_d},
\qquad i=1,\ldots,d-1.
\]
On a graph neighbourhood, $D_i$ is differentiation with respect to the graph coordinate $x_i$.

Since $F$ is locally degenerate on $W$, locally on $W$ we have
\[
f_1(x_1)+\cdots+f_d(x_d)=0
\]
for one-variable analytic local isomorphisms $f_i$. Differentiating on the graph gives
\[
f_i'(x_i)+f_d'(\psi)\psi_{x_i}=0,
\]
and hence
\[
r_{ij}=\frac{f_i'(x_i)}{f_j'(x_j)}.
\]
Consequently, on $W$,
\begin{equation}\label{eq:prop-independence}
D_k r_{ij}=0
\end{equation}
whenever $i,j,k$ are pairwise distinct, and
\begin{equation}\label{eq:prop-mixed}
r_{ij}D_iD_jr_{ij}-(D_i r_{ij})(D_j r_{ij})=0.
\end{equation}

The expressions in~\eqref{eq:prop-independence} and \eqref{eq:prop-mixed} are rational expressions in derivatives of $F$. Their denominators are products of coordinate partial derivatives of $F$, and hence are nonzero on $V^\circ$. Thus, after multiplying by a common denominator, each identity is equivalent on $V^\circ$ to the vanishing of a real analytic function on~$V$. These analytic functions vanish on the nonempty relatively open set~$W$. Since $V$ is connected,
the real analytic identity theorem implies that they vanish identically on $V$. Restricting again to $V^\circ$, where the denominators are nonzero, yields~\eqref{eq:prop-independence} and \eqref{eq:prop-mixed} throughout $V^\circ$.

Now let $p=(c_1,\ldots,c_d)\in V^\circ$ be arbitrary, and shrink to a graph neighbourhood of $p$ on which all $\psi_{x_i}$ are nonzero. We first show that there exist nonvanishing one-variable analytic functions $a_i(x_i)$ and a nonvanishing analytic function $\lambda$ such that
\begin{equation}\label{eq:common-lambda}
\psi_{x_i}=\lambda\,a_i(x_i),
\qquad i=1,\ldots,d-1.
\end{equation}

If $d=3$, there are two graph variables. Put $r=r_{12}$. Equation~\eqref{eq:prop-mixed} gives
\[
\frac{\partial^2}{\partial x_1\partial x_2}\log|r|=0,
\]
so locally
\[
r=\frac{a_1(x_1)}{a_2(x_2)}
\]
for nonvanishing one-variable analytic functions $a_1,a_2$. Then
\[
\lambda=\frac{\psi_{x_1}}{a_1(x_1)}
      =\frac{\psi_{x_2}}{a_2(x_2)}
\]
gives~\eqref{eq:common-lambda}.

Suppose now that $d\geq4$. For $i=1,\ldots,d-1$, set
\[
u_i=\log|\psi_{x_i}|.
\]
For each $k\in\{1,\ldots,d-1\}$, define
\[
b_k=\frac{\partial u_i}{\partial x_k}
\]
using any index $i\neq k$. This is well defined: if $i,j\neq k$, then~\eqref{eq:prop-independence} gives
\[
\frac{\partial}{\partial x_k}(u_i-u_j)=0.
\]
The one-form
\[
\beta=\sum_{k=1}^{d-1} b_k\,dx_k
\]
is closed. Indeed, if $k\neq \ell$, choose $m$ distinct from both $k$ and $\ell$. Then
\[
b_k=\frac{\partial u_m}{\partial x_k},
\qquad
b_\ell=\frac{\partial u_m}{\partial x_\ell},
\]
so
\[
\frac{\partial b_k}{\partial x_\ell}
=
\frac{\partial b_\ell}{\partial x_k}.
\]
After shrinking the neighbourhood, there is therefore an analytic function $L$ with $dL=\beta$. Set $\lambda=e^L$. For $k\neq i$ we then have
\[
\frac{\partial}{\partial x_k}
\log\left|\frac{\psi_{x_i}}{\lambda}\right|=0.
\]
Hence
\[
a_i(x_i):=\frac{\psi_{x_i}}{\lambda}
\]
depends only on $x_i$, and~\eqref{eq:common-lambda} follows.

Since mixed partial derivatives of $\psi$ commute, for $i\neq j$ equation~\eqref{eq:common-lambda} gives
\[
a_i(x_i)\lambda_{x_j}=a_j(x_j)\lambda_{x_i}.
\]
Equivalently,
\[
\lambda_{x_i}\psi_{x_j}-\lambda_{x_j}\psi_{x_i}=0,
\]
so
\[
d\lambda\wedge d\psi=0.
\]
Because $d\psi\neq0$ on the chosen neighbourhood, $\lambda$ is constant on the local level sets of $\psi$. Thus there is a nonvanishing one-variable analytic function $h$ such that
\[
\lambda=h(\psi).
\]
Therefore
\[
\psi_{x_i}=h(\psi)a_i(x_i).
\]

Choose one-variable analytic functions $f_i$, $i=1,\ldots,d-1$, and $f_d$ satisfying
\[
f_i'(x_i)=-a_i(x_i),
\qquad
f_d'(x_d)=\frac1{h(x_d)}.
\]
Then, on the graph $x_d=\psi(x_1,\ldots,x_{d-1})$,
\[
\frac{\partial}{\partial x_i}
\left(
 f_1(x_1)+\cdots+f_{d-1}(x_{d-1})+f_d(\psi)
\right)=0
\]
for every $i=1,\ldots,d-1$. Hence, after shrinking to a connected graph neighbourhood,
\[
f_1(x_1)+\cdots+f_d(x_d)=C
\]
on $V$. Subtracting $C$ from one of the functions, we may assume $C=0$.

At the point $p=(c_1,\ldots,c_d)$ we then have
\[
f_1(c_1)+\cdots+f_d(c_d)=0.
\]
Replacing each $f_i$ by $f_i-f_i(c_i)$ preserves the relation and makes $f_i(c_i)=0$ for every $i$. All derivatives $f_i'$ are nonzero, so after shrinking the coordinate intervals the $f_i$ are analytic isomorphisms onto a common small interval $J$ containing $0$.

Finally, the equation
\[
f_1(x_1)+\cdots+f_d(x_d)=0
\]
defines, by the implicit function theorem and the fact that $f_d'\neq0$, a unique local graph in the $x_d$-direction. This graph contains the local graph $V$, so the two coincide. Therefore
\[
F(x_1,\ldots,x_d)=0
\quad\Longleftrightarrow\quad
f_1(x_1)+\cdots+f_d(x_d)=0
\]
locally near $p$. Thus $F$ is locally degenerate at $p$. Since $p\in V^\circ$ was arbitrary, the conclusion follows.
\end{proof}

\section{Degeneracy of the family of analytic hyperspheres}\label{sec: distance phenomena}	
Recall from Section~\ref{sec: main results} that a one-parameter family of hyperspheres
\[
\mathcal P=\{S_t:t\in I\}
\]
is encoded by the incidence hypersurface
\[
Q_{\mathcal P}=Z(F)\subset\mathbb R^d\times I,
\]
where
\[
F(\bar x,t)= \sum_{i=1}^d (x_i-a_i(t))^2-R(t)^2.
\]
In Section~\ref{sec: main results}, we stated the incidence application for polynomial families, so that the associated incidence relation is semialgebraic and the Elekes-Szab\'{o} theorem applies. In the present section, however, the degeneracy analysis is local and analytic. We therefore allow the centre functions $a_i(t)$ and the radius function $R(t)$ to be real analytic on the interval $I$.

Our goal is to classify the analytic families for which the associated function $F(\bar x,t)$ is degenerate in the sense of Definition~\ref{def:degenerate_polynomial}. 

In this section the variables are
\[
x_1,\ldots,x_d,t.
\]
Thus, when applying Proposition~\ref{prop:boundary-continuation}, coordinate slices may be of the form
\[
x_j=\mathrm{const}
\]
or of the form
\[
t=\mathrm{const}.
\]

\begin{lemma}\label{lm: extra boundary variety}
Let $d \ge 2$, and let
\[
F(x_1,\dots,x_d,t)=\sum_{i=1}^{d}(x_i-a_i(t))^2-R^2(t)
\]
define a real-analytic family of spheres in \(\mathbb R^d\), where the functions $a_i(t)$ and $R(t)$ are real analytic on an interval $I$, and assume $R(t)>0$ for each $t \in I$. For each $i\in \{1,\dots,d\}$, define
\[
\Sigma_i:= \left\{(x_1,\dots, x_d,t)\in \mathbb R^{d+1}:\, F=0,\quad \frac{\partial F}{\partial x_i}=0 \right\}.
\]
Then for each $t\in I$, the fibre $\Sigma_i(t)$ is nonempty. Moreover, no local irreducible component of $\Sigma_i$ is contained in a slice
\[
\{t=\mathrm{const}\}.
\]
\end{lemma}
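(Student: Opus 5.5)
The plan is to compute $\Sigma_i$ explicitly. Since $F_{x_i}=2(x_i-a_i(t))$, the conditions $F=0$ and $F_{x_i}=0$ are equivalent to
\[
x_i=a_i(t),\qquad \sum_{j\neq i}(x_j-a_j(t))^2=R^2(t).
\]
So for each fixed $t\in I$ the fibre $\Sigma_i(t)$ is the "equatorial" $(d-2)$-sphere of $S_t$: centre $(a_j(t))_{j\neq i}$ and radius $R(t)>0$, lying in the hyperplane $x_i=a_i(t)$. Because $R(t)>0$, this fibre is nonempty. When $d=2$ it is the pair of points $x_j=a_j(t)\pm R(t)$; when $d\ge3$ it is a genuine sphere of positive dimension. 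This settles the first assertion.

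For the second assertion, I will show that $\Sigma_i$ is a smooth real analytic submanifold of dimension $d-1$ that is transverse to the slices $\{t=c\}$. Write $\Sigma_i=Z(H_1,H_2)$ with
\[
H_1=x_i-a_i(t),\qquad H_2=\sum_{j\neq i}(x_j-a_j(t))^2-R^2(t).
\]
The gradient $\nabla H_1$ has a $1$ in the $x_i$-slot. On $\Sigma_i$, $\nabla H_2$ has the entries $2(x_j-a_j(t))$ with $j\neq i$ in the remaining $x$-slots, and these are not all zero because their squares sum to $R^2(t)>0$. Hence $\nabla H_1$ and $\nabla H_2$ are linearly independent at every point of $\Sigma_i$, so $\Sigma_i$ is smooth of dimension $(d+1)-2=d-1$. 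Its local irreducible components are therefore just its connected pieces near each point, each of dimension $d-1$. More concretely, $\Sigma_i$ is parametrized analytically by
\[
(t,\omega)\mapsto \bigl(a(t)+R(t)\,\iota_i(\omega),\,t\bigr),\qquad \omega\in S^{d-2},
\]
where $\iota_i$ places $\omega$ into the coordinates $j\neq i$ and puts $0$ in the $i$-th slot. This parametrization is a local diffeomorphism onto $\Sigma_i$.

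Now suppose some local irreducible component $\Gamma$ of $\Sigma_i$ near a point $q$ were contained in a slice $\{t=c\}$. Then $\Gamma$ would lie in $\Sigma_i(c)$, which is a sphere of dimension $d-2$. But a local irreducible component of the smooth $(d-1)$-dimensional manifold $\Sigma_i$ is a neighbourhood of $q$ in $\Sigma_i$, so it has dimension $d-1$. This is a contradiction. Equivalently, the function $t$ restricted to $\Sigma_i$ has nonvanishing differential, since $t$ is one of the parameters, so it cannot be constant on any open subset.

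The only delicate point is making "local irreducible component" precise. I will argue via smoothness: near each point, the germ of $\Sigma_i$ is a connected manifold germ and hence irreducible, so the component is the whole germ. In the case $d=2$ the components are analytic curves $t\mapsto(a(t)\pm R(t)e_j,t)$, which are visibly not contained in any slice $\{t=c\}$.
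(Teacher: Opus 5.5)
Your proof is correct and follows the paper's argument: the same explicit description $x_i=a_i(t)$, $\sum_{j\neq i}(x_j-a_j(t))^2=R^2(t)$ shows each fibre is nonempty, and the second claim rests on $t$ varying along every local piece of $\Sigma_i$. Your gradient-independence check and the parametrization $(t,\omega)\mapsto(a(t)+R(t)\iota_i(\omega),t)$ make rigorous the paper's one-line remark that ``this description varies analytically with $t$''. They do so by showing that $\Sigma_i$ is a smooth $(d-1)$-manifold on which the restriction of $dt$ never vanishes.
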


\begin{proof}
We have
\[
\frac{\partial F}{\partial x_i}=2(x_i-a_i(t)).
\]
Hence $\Sigma_i$ is given by
\[
x_i=a_i(t),
\]
together with
\[
\sum_{j\neq i}(x_j-a_j(t))^2=R^2(t).
\]
For fixed $t\in I$, this is the equation of a sphere of radius $R(t)$ in the $(d-1)$-dimensional affine space
\[
\{x_i=a_i(t)\}.
\]
Since $R(t)>0$, this fibre is nonempty. In fact, it is a sphere of dimension $d-2$.
	
Moreover, this description varies analytically with $t$. Thus, near every point of $\Sigma_i$, the parameter $t$ varies along the local component. Therefore no local irreducible component of $\Sigma_i$ can be contained in a slice
\[
\{t=\mathrm{const}\}.
\]
\end{proof}

\begin{theorem}\label{thm:analytic-hypersphere-degeneracy}
Let $d\geq2$ and let $I\subset\mathbb R$ be an open interval. Let $\gamma \colon I \rightarrow \R^d$, $\gamma(t)=(a_1(t),\ldots,a_d(t))$, and $R \colon I \rightarrow \mathbb R$ be real-analytic functions such that $R(t)>0$ for every $t\in I$. Define
\[
F(\bar{x},t)=\sum_{i=1}^d (x_i-a_i(t))^2-R(t)^2.
\]
Then:
\begin{itemize}
	\item If $d=2$, then $F$ is degenerate in the sense of Definition~\ref{def:degenerate_polynomial} if and only if either the image of $\gamma$ is contained in a line parallel to one of the coordinate axes and $R$ is constant, or the family is concentric, equivalently, $\gamma$ is constant.
	\item If $d\geq3$, then $F$ is degenerate in the sense of Definition~\ref{def:degenerate_polynomial} if and only if the family is concentric, equivalently, $\gamma(t)\equiv a_0$ for some fixed point $a_0\in\mathbb R^d$.
	\end{itemize}
\end{theorem}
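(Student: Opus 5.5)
The proof splits into the two implications. For the sufficiency direction we write down the additive forms explicitly. For the necessity direction we combine the boundary-variety obstruction of Proposition~\ref{prop:boundary-continuation} with Lemma~\ref{lm: extra boundary variety}.

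\emph{Sufficiency.} In the concentric case $\gamma\equiv a_0$, the relation $F=0$ reads
\[
\sum_i (x_i-a_{0,i})^2 - R(t)^2=0 .
\]
Near a coordinate-regular point we have $x_i\neq a_{0,i}$, so $f_i(x_i)=(x_i-a_{0,i})^2-c_i$ is a local analytic isomorphism. At such a point $F_t=-2RR'\neq0$, so $f_{d+1}(t)=-R(t)^2+c$ is also one. After normalising the constants so that each $f$ vanishes at the base point, we obtain the additive form of Definition~\ref{def:degenerate_polynomial}.

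In the planar case with $R\equiv R_0$ and, say, $a_2\equiv b$, the relation becomes
\[
(x_1-a_1(t))^2+(x_2-b)^2=R_0^2 .
\]
Near a coordinate-regular point this is equivalent to $x_1-a_1(t)=\pm\sqrt{R_0^2-(x_2-b)^2}$, where the sign is fixed locally. This is additive in $x_1$, $x_2$, $t$, with $a_1$ locally invertible because $F_t\neq0$. Normalising the constants again gives the required form.

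\emph{Necessity.} Assume $F$ is degenerate. The coordinate-regular locus of $Q=Z(F)$ is dense in any open piece of $Q$ unless $\gamma$ and $R$ are constant, a trivial case, since $F_{x_i}$ and $F_t$ are analytic and not identically zero on $Q$. So Proposition~\ref{prop:boundary-continuation} applies near every point of $\Sigma_i$. Each local component of $\Sigma_i$ therefore lies in a slice $\{x_j=c\}$ or $\{t=c\}$, and Lemma~\ref{lm: extra boundary variety} excludes $\{t=c\}$.

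By Lemma~\ref{lm: extra boundary variety}, $\Sigma_i$ is the family, parametrised by $t$, of $(d-2)$-spheres of radius $R(t)$ centred at $\gamma(t)$ inside $\{x_i=a_i(t)\}$.
\begin{itemize}
\item[] \emph{Case $j=i$.} Containment in $\{x_i=c\}$ forces $a_i(t)\equiv c$ on an open interval, hence everywhere by analyticity.
\item[] \emph{Case $j\neq i$, $d\ge3$.} On each fibre the coordinate $x_j$ ranges over the nondegenerate interval $[a_j(t)-R(t),a_j(t)+R(t)]$, so it cannot be constant on an open subset of a local component. Hence $j=i$ is forced, and every $a_i$ is constant: the family is concentric.
\item[] \emph{Case $j\neq i$, $d=2$.} Here $\Sigma_1$ consists of the two curves $t\mapsto(a_1(t),\,a_2(t)\pm R(t),\,t)$. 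Containment in $\{x_2=c\}$ means $a_2\pm R$ is constant. The same reasoning applied to $\Sigma_2$ gives either $a_2$ constant or one of $a_1\pm R$ constant, and the same dichotomy holds for the other branch.
\end{itemize}

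The main obstacle is this last $d=2$ case analysis. The aim is to show that the only consistent outcomes are $\gamma$ constant, or $R$ constant together with one of $a_1,a_2$ constant. What remains to exclude is mixed outcomes, such as $a_2+R$ and $a_2-R$ both constant, which forces $R$ constant and $a_2$ constant, the allowed case. The genuinely new possibility is something like $a_2+R$ constant together with $a_1+R$ constant, i.e.\ circles tangent to two fixed perpendicular lines.

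For these residual cases I would go back to Lemma~\ref{lem:propagation-degeneracy} and the differential identity \eqref{eq:r-mixed}. Solve $F=0$ locally as $t=\psi(x_1,x_2)$. Then the ratio
\[
\frac{\psi_{x_1}}{\psi_{x_2}}=\frac{x_1-a_1(t)}{x_2-a_2(t)}
\]
must split as $A(x_1)B(x_2)$, i.e.\ it must satisfy $\partial_{x_1}\partial_{x_2}\log|r|=0$. Substituting the tangent-circle ansatz $a_1=c_1-R$, $a_2=c_2-R$ with nonconstant $R$ should yield a contradiction by direct computation.

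Finally, $\gamma$ constant already gives concentricity, so these computations complete the $d=2$ classification.
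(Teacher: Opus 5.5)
\textbf{Gap in the $d=2$ necessity.} Your sufficiency argument and your $d\ge 3$ necessity argument are essentially the paper's. The problem is in $d=2$. You leave the case analysis open and assert a ``genuinely new'' residual case: circles tangent to two perpendicular lines, $a_1=c_1-R$, $a_2=c_2-R$ with $R$ nonconstant. You then propose to rule it out by a computation with the mixed-log identity, which you only say ``should'' give a contradiction. As written, the $d=2$ direction is therefore not proved.

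\textbf{The residual case does not exist.} The slice dichotomy must hold for \emph{both} branches of $\Sigma_1$ simultaneously. You should split on whether $a_1$ is constant, rather than pairing one branch of $\Sigma_1$ with one branch of $\Sigma_2$.
\begin{itemize}
\item[] \emph{$a_1$ nonconstant.} Neither branch $\{x_1=a_1(t),\ x_2=a_2(t)\pm R(t)\}$ can lie in a slice $\{x_1=c\}$, since $t$ varies along it and analyticity would then force $a_1$ to be constant. So both branches lie in $x_2$-slices. Hence $a_2+R$ and $a_2-R$ are both constant, and so $a_2$ and $R$ are constant.
\item[] \emph{$a_1$ constant, $a_2$ nonconstant.} The same argument applied to $\Sigma_2$ gives $a_1\pm R$ constant, hence $R$ constant.
\item[] \emph{Both constant.} The family is concentric.
\end{itemize}
Your tangent-circle ansatz falls into the first case, because $a_1=c_1-R$ is nonconstant. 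Its lower $\Sigma_1$-branch has $x_2=a_2-R=c_2-2R(t)$, which lies in no coordinate slice. This is exactly how the paper closes the case; no differential computation is needed.

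\textbf{A smaller omission.} Your claim that the coordinate-regular locus is dense needs two facts you only assert.
\begin{itemize}
\item[] First, $Q$ must be connected. For example, $(u,t)\mapsto(\gamma(t)+R(t)u,t)$ is an analytic diffeomorphism $S^{d-1}\times I\to Q$.
\item[] Second, you need an argument that $F_t|_Q\not\equiv 0$ unless $\gamma$ and $R$ are both constant. The paper evaluates $F_t$ at $\gamma(t)\pm R(t)u$. This gives $u\cdot\gamma'(t)+R'(t)=0$ and $-u\cdot\gamma'(t)+R'(t)=0$ for all unit $u$, so $\gamma'=0$ and $R'=0$.
\end{itemize}
Only with connectedness does ``not identically zero'' yield a zero set with empty interior in every open piece of $Q$.
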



\begin{proof}
Let $F(\bar{x},t)$ be the defining function above. For each fixed $t\in I$, the fibre
\[
S_t=\{\bar{x}\in\mathbb R^d:F(\bar{x},t)=0\}
\]
is a hypersphere, so $Z(F)$ is the total space of the one-parameter family.

Assume now that $F$ is degenerate in the sense of Definition~\ref{def:degenerate_polynomial}. If both $\gamma$ and $R$ are constant, then the family is concentric and there is nothing to prove. Hence we may assume that $\gamma$ and $R$ are not both constant.

We claim that the coordinate-regular locus $V^\circ$ is dense in
\[
V=Z(F).
\]
First observe that $V$ is smooth. Indeed, on $V$ we have
\[
\sum_{i=1}^d (x_i-a_i(t))^2=R(t)^2>0,
\]
so at least one of the partial derivatives
\[
F_{x_i}=2(x_i-a_i(t))
\]
is nonzero.

Moreover, $V$ is connected; in fact, the map
\[
S^{d-1}\times I\longrightarrow V,\qquad
(u,t)\longmapsto \bigl(\gamma(t)+R(t)u,t\bigr),
\]
is an analytic diffeomorphism. For each $i=1,\ldots,d$, the restriction $F_{x_i}|_V$ is not identically zero. Indeed, for any fixed $t\in I$, at the point
\[
x=\gamma(t)+R(t)e_i
\]
we have
\[
F_{x_i}=2R(t)\neq0.
\] 
It remains to consider $F_t$. We have
\[
F_t= -2\sum_{i=1}^d(x_i-a_i(t))a_i'(t)-2R(t)R'(t).
\]
Suppose that $F_t$ vanished identically on $V$. Writing
\[
x=\gamma(t)+R(t)u, \qquad u\in S^{d-1},
\]
we would obtain
\[
u\cdot\gamma'(t)+R'(t)=0
\]
for every $u\in S^{d-1}$. Replacing $u$ by $-u$ and subtracting the two identities gives
\[
u\cdot\gamma'(t)=0
\]
for every $u\in S^{d-1}$. Hence
\[
\gamma'(t)=0,
\]
and consequently $R'(t)=0$. Hence both $\gamma$ and $R$ would be constant, contrary to our assumption. Thus $F_t|_V$ is not identically zero.

Therefore each coordinate partial derivative restricts to a nonzero real analytic function on the connected real analytic manifold $V$. Its zero set has empty interior, and hence
\[
V^\circ
=
V\setminus
\left(
\bigcup_{i=1}^d\{F_{x_i}=0\}
\cup
\{F_t=0\}
\right)
\]
is dense in $V$.

Since $F$ is locally degenerate at every point of $V^\circ$, Proposition~\ref{prop:boundary-continuation} implies that every local irreducible component of each boundary variety of $F$ is contained in a coordinate slice.


We first treat the case $d\geq 3$. Fix $i\in\{1,\ldots,d\}$, and let $\Gamma$ be a local irreducible component of $\Sigma_i$. By Proposition~\ref{prop:boundary-continuation}, $\Gamma$ is contained in a coordinate slice. Since the variables are $x_1,\ldots,x_d,t$, there are three possibilities:
\[
\Gamma\subset\{x_i=c\},
\]
\[
\Gamma\subset\{x_k=c\}\quad\text{for some }k\neq i,
\]
or
\[
\Gamma\subset\{t=c\}.
\]
The third possibility is impossible by Lemma~\ref{lm: extra boundary variety}. Thus only the first two possibilities remain.

We first consider the case
\[
\Gamma\subset\{x_i=c\}.
\]
On $\Sigma_i$, we have
\[
F_{x_i}=2(x_i-a_i(t))=0,
\]
and hence
\[
x_i=a_i(t).
\]
Therefore, on $\Gamma$, we get
\[
a_i(t)=c.
\]
Since $\Gamma$ is not contained in a slice $t=\mathrm{const}$, the parameter $t$ varies along $\Gamma$. Hence $a_i(t)=c$ locally. By analyticity,
\[
a_i(t)\equiv c \quad \text{on} \quad I.
\]
Thus, if for every $i=1,\ldots,d$ the local components of $\Sigma_i$ are contained in slices of the form $x_i=\mathrm{const}$, then all functions
$a_i(t)$ are constant. Hence the centre curve
\[
\gamma(t)=(a_1(t),\ldots,a_d(t))
\]
is constant, and the family is concentric.

It remains to rule out the second possibility when $d\geq3$. Suppose, for contradiction, that
\[
\Gamma\subset\{x_k=c\} \qquad\text{for some }k\neq i.
\]
On $\Sigma_i$, we have
\[
x_i=a_i(t)
\]
and
\[
\sum_{j\neq i}(x_j-a_j(t))^2=R(t)^2.
\]
Fix a point $q\in \Gamma$, and let $t_0\in I$ be its $t$-coordinate. Since $R(t_0)>0$, the fibre of $\Sigma_i$ over $t_0$ is a genuine $(d-2)$-sphere in the affine hyperplane
\[
\{x_i=a_i(t_0)\}.
\]
Since $d\geq3$, this sphere has positive dimension. If a local component of $\Sigma_i$ were contained in $\{x_k=c\}$, then a nonempty open piece of this positive-dimensional sphere would be contained in the hyperplane
\[
\{x_i=a_i(t_0)\}\cap\{x_k=c\}.
\]
This is impossible, because a positive-dimensional sphere cannot contain a nonempty open piece inside an affine hyperplane. Therefore the second possibility is impossible. Hence, for $d\geq3$, every local component of $\Sigma_i$ must be contained in a slice of the form
\[
x_i=\mathrm{const}.
\]
By the first case, each $a_i(t)$ is constant. Therefore the family is concentric.

We now treat the case $d=2$. In this case
\[
F(x_1,x_2,t)=(x_1-a_1(t))^2+(x_2-a_2(t))^2-R(t)^2.
\]
The boundary varieties are
\[
\Sigma_1=\{F=0,\ F_{x_1}=0\}
\]
and
\[
\Sigma_2=\{F=0,\ F_{x_2}=0\}.
\]
Since
\[
F_{x_1}=2(x_1-a_1(t)),
\]
we have $x_1=a_1(t)$ on $\Sigma_1$. Hence the local components of $\Sigma_1$ are given by
\[
x_1=a_1(t), \qquad x_2=a_2(t)+R(t),\quad \text{and}\quad x_1=a_1(t), \qquad x_2=a_2(t)-R(t).
\]
Similarly, since
\[
F_{x_2}=2(x_2-a_2(t)),
\]
we have $x_2=a_2(t)$ on $\Sigma_2$. Hence the local components of $\Sigma_2$ are given by
\[
x_2=a_2(t), \qquad x_1=a_1(t)+R(t), \quad
\text{and} \quad
x_2=a_2(t), \qquad x_1=a_1(t)-R(t).
\]
Let $\Gamma$ be a local irreducible component of either $\Sigma_1$ or $\Sigma_2$. By Proposition~\ref{prop:boundary-continuation}, $\Gamma$ is contained in a coordinate slice. Since the variables are $x_1,x_2,t$, there are three possibilities:
\[
\Gamma\subset\{x_1=\mathrm{const}\},
\]
\[
\Gamma\subset\{x_2=\mathrm{const}\},
\]
or
\[
\Gamma\subset\{t=\mathrm{const}\}.
\]
The third possibility is impossible by Lemma~\ref{lm: extra boundary variety}. Thus every local component is contained either in a slice $x_1=\mathrm{const}$ or in a slice $x_2=\mathrm{const}$.

We first consider the same-coordinate case. Suppose that the local components of $\Sigma_1$ are contained in slices of the form
\[
x_1=\mathrm{const},
\]
and the local components of $\Sigma_2$ are contained in slices of the form
\[
x_2=\mathrm{const}.
\]
Since $x_1=a_1(t)$ on $\Sigma_1$, it follows that $a_1(t)$ is locally constant. By analyticity, $a_1(t)$ is constant on $I$. Similarly, since $x_2=a_2(t)$ on $\Sigma_2$, it follows that $a_2(t)$ is constant on $I$. Therefore the centre curve
\[
\gamma(t)=(a_1(t),a_2(t))
\]
is constant, and the family is concentric.

It remains to analyse the mixed case. If both $a_1(t)$ and $a_2(t)$ are constant, then we are already in the concentric case. Hence we may assume that at least one of them is not constant.

Suppose first that $a_1(t)$ is not constant. Then no local component of $\Sigma_1$ can be contained in a slice
\[
x_1=\mathrm{const},
\]
because on $\Sigma_1$ we have $x_1=a_1(t)$, and the parameter $t$ varies along each local component. Therefore both local components of $\Sigma_1$ must be contained in slices of the form
\[
x_2=\mathrm{const}.
\]
Using the explicit description of the two local components of $\Sigma_1$, we obtain
\[
a_2(t)+R(t)=c_+
\]
and
\[
a_2(t)-R(t)=c_-
\]
for some constants $c_+$ and $c_-$. Adding and subtracting these two identities gives
\[
a_2(t)=\frac{c_++c_-}{2}
\quad \text{and} \quad
R(t)=\frac{c_+-c_-}{2}.
\]
Thus $a_2(t)$ and $R(t)$ are constant. Hence the centre curve is contained in the line
\[
x_2=\mathrm{const},
\]
and the radius is constant.

The remaining case is symmetric. If $a_2(t)$ is not constant, then no local component of $\Sigma_2$ can be contained in a slice
\[
x_2=\mathrm{const}.
\]
Therefore both local components of $\Sigma_2$ must be contained in slices of the form
\[
x_1=\mathrm{const}.
\]
Using the explicit description of the two local components of $\Sigma_2$, we obtain
\[
a_1(t)+R(t)=c_+
\]
and
\[
a_1(t)-R(t)=c_-
\]
for some constants $c_+$ and $c_-$. Hence $a_1(t)$ and $R(t)$ are constant. Therefore the centre curve is contained in the line
\[
x_1=\mathrm{const},
\]
and the radius is constant.

We conclude that, in dimension $2$, either the family is concentric, or the centre curve is contained in a line parallel to one of the coordinate axes and the radius is constant.

We now prove the converse direction. First assume that the family is concentric. Then there are constants
$c_1,\ldots,c_d$ such that
\[
a_i(t)=c_i \qquad\text{for all }i=1,\ldots,d.
\]
Hence
\[
F(x_1,\ldots,x_d,t)=\sum_{i=1}^d (x_i-c_i)^2-R(t)^2.
\]
Let $p=(x_1^0,\ldots,x_d^0,t_0)$ be a coordinate regular point of $Z(F)$. Then
\[
x_i^0\neq c_i \qquad\text{for all }i=1,\ldots,d,
\]
and also
\[
F_t(p)=-2R(t_0)R'(t_0)\neq 0.
\]
Hence $R(t_0)$ and $R'(t_0)$ are non zero. Therefore the one-variable functions
\[
f_i(x_i)=(x_i-c_i)^2, \qquad i=1,\ldots,d,
\]
and
\[
f_{d+1}(t)=-R(t)^2
\]
are local analytic isomorphisms near $x_i^0$ and $t_0$, respectively. Since their values at the corresponding coordinates of $p$ sum to $0$, after subtracting those values and shrinking the domains we may assume that they are analytic isomorphisms onto a common interval $J$ containing $0$. Moreover, near $p$, we have
\[
F(x_1,\ldots,x_d,t)=0
\]
if and only if
\[
f_1(x_1)+\cdots+f_d(x_d)+f_{d+1}(t)=0.
\]
Thus $F$ is locally degenerate at $p$. Since $p$ was arbitrary, $F$ is degenerate.

It remains to consider the additional case in dimension $2$. After relabelling the coordinates, we may assume that
\[
a_2(t)=c \qquad\text{and}\qquad R(t)=R_0>0.
\]
Thus
\[
F(x_1,x_2,t)=(x_1-a_1(t))^2+(x_2-c)^2-R_0^2.
\]
Let $p=(x_1^0,x_2^0,t_0)$ be a coordinate regular point of $Z(F)$. Near $p$, one can choose a sign $\varepsilon\in\{+1,-1\}$ such that
\[
F(x_1,x_2,t)=0
\]
is equivalent to
\[
-x_1+\varepsilon\sqrt{R_0^2-(x_2-c)^2}+a_1(t)=0.
\]
The three functions
\[
f_1(x_1)=-x_1, \qquad f_2(x_2)=\varepsilon\sqrt{R_0^2-(x_2-c)^2}, \qquad f_3(t)=a_1(t)
\]
have nonzero derivatives at $x_1^0,x_2^0,t_0$, respectively, because $p$ is coordinate regular. Hence they are local analytic isomorphisms. Since their values at $(x_1^0,x_2^0,t_0)$ sum to $0$, after subtracting those values and shrinking the domains we may assume that they are analytic isomorphisms onto a common interval $J$ containing $0$. Therefore, near $p$,
\[
F(x_1,x_2,t)=0
\]
if and only if
\[
f_1(x_1)+f_2(x_2)+f_3(t)=0.
\]
Thus $F$ is locally degenerate at $p$. Since $p$ was arbitrary, $F$ is degenerate. The case $a_1(t)=c$ and $R(t)=R_0$ is symmetric.
\end{proof}

\section{Proof of Theorem~\ref{thm: pencil of spheres}}  \label{sec: pencil of spheres}

Let $\mathcal P=\{S_t:t\in I\}$ be a regular one-parameter polynomial family of hyperspheres in $\mathbb R^d$. By definition, $\mathcal P$ is represented by a polynomial equation
\[
F(\bar{x},t):=\sum_{i=1}^d (x_i-a_i(t))^2-R^2(t)=0,
\]
where
\[
a_1(t),\dots,a_d(t),R(t)\in\mathbb R[t].
\]
We denote by
\[
Q_{\mathcal P}=\{(\bar x,t)\in\mathbb R^d\times I:F(\bar x,t)=0\}
\]
the corresponding incidence hypersurface. Since $\mathcal P$ is regular, $Q_{\mathcal P}$ is smooth, and the projection of $Q_{\mathcal P}$ onto any $d$ of the $d+1$ coordinates is finite-to-one.

Our goal is to classify those pencils which determine a near-maximal number of incidences with a Cartesian grid. Let
\[
A_1,\dots,A_d\subset \mathbb R, \qquad T_0\subset I
\]
be finite sets with
\[
|A_1|=\cdots=|A_d|=|T_0|=n.
\]
A tuple
\[
(x_1,\ldots,x_d,t)\in A_1\times\cdots\times A_d\times T_0
\]
will be called a $(d+1)$-rich tuple if the point
\[
\bar{x}=(x_1,\ldots,x_d)
\]
lies on the hypersphere $S_t$. Equivalently,
\[
F(x_1,\ldots,x_d,t)=0.
\]
Thus the set of all $(d+1)$-rich tuples is precisely
\[
Q_{\mathcal P}\cap(A_1\times\cdots\times A_d\times T_0).
\]
Assume that $Q_{\mathcal P}$ intersects such a Cartesian grid in a near-maximal number of points, in the sense that
\[
|Q_{\mathcal P}\cap(A_1\times\cdots\times A_d\times T_0)|= \Omega(n^{d-\eta}).
\]
We will prove that such families are necessarily highly structured: they consist either of concentric hyperspheres in any dimension, or, in the special case $d=2$, of circles with fixed radius whose centres lie on a line parallel to one of the coordinate axes.

We shall use the following Elekes-Szab\'{o} theorem, due to Chernikov, Peterzil and Starchenko~\cite[Theorem~A]{CPS}.
\begin{theorem}\label{thm:ES-general}
Let $s\geq3$, and let $Q\subset\mathbb R^s$ be a semialgebraic set of description complexity at most $D$. Assume that the projection of $Q$ to any $s-1$ coordinates is finite-to-one. Then exactly one of the following holds:
\begin{enumerate}
\item \emph{(Incidence bound)} There exist constants $c=c(s,D)>0$ and $\eta_0=\eta_0(s)>0$ such that, for all finite sets $A_i\subset\mathbb R$ with $|A_i|=n$,
\[
|Q\cap(A_1\times\cdots\times A_s)|\leq c\,n^{s-1-\eta_0}.
\]
\item \emph{(Group structure)} There exist nonempty open sets $U_i\subset\mathbb R$, an open set $V\subset\mathbb R$ containing $0$, and analytic bijections $\phi_i:U_i\to V$ with analytic inverses such that, for all $x_i\in U_i$,
\[
(x_1,\ldots,x_s)\in Q
\quad\Longleftrightarrow\quad
\phi_1(x_1)+\cdots+\phi_s(x_s)=0.
\]
\end{enumerate}
\end{theorem}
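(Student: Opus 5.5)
The statement above is Theorem~A of Chernikov, Peterzil and Starchenko~\cite{CPS}, quoted from the literature, so in the paper one cites it rather than proves it. If I had to reconstruct the argument, I would follow their two-part strategy: a combinatorial part giving a power saving whenever $Q$ is "non-degenerate", and a model-theoretic part identifying the only way to avoid that saving as a local abelian group. The role of the finite-to-one hypothesis is that the trivial bound is $|Q\cap(A_1\times\cdots\times A_s)|=O(n^{s-1})$, since any $s-1$ coordinates determine boundedly many points of $Q$.

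\emph{Exclusivity.} I would first show the two alternatives cannot both hold. Suppose the group structure holds with bijections $\phi_i:U_i\to V$. Choose a small $\delta>0$ and set $A_i=\phi_i^{-1}(\{k\delta/n:|k|\le n\})$ for $i<s$, and $A_s=\phi_s^{-1}(\{k\delta/n:|k|\le sn\})$. For $\delta$ small all these points lie in the $U_i$, and this grid contains $\gg n^{s-1}$ points of $Q$. (The sets have size $O(n)$ rather than exactly $n$, which is harmless after rescaling.) This contradicts the incidence bound.

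\emph{Induction and the main dichotomy.} Next I would show that at least one alternative holds, by induction on $s$, with the base case $s=3$ of Elekes--Szab\'o type. For the combinatorial input I would view $Q$ as a semialgebraic incidence relation between $A_1\times\cdots\times A_{s-2}$ and $A_{s-1}\times A_s$. I would then apply the distal cutting lemma, or equivalently the semialgebraic Zarankiewicz bound of Fox--Pach--Sheffer--Suk--Zahl. This gives a saving $n^{s-1-\eta_0}$ provided the associated bipartite graph contains no large complete bipartite subgraph $K_{k,k}$.

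The central step is the converse: if the saving fails for every $\eta_0$, extract a group. Many $K_{k,k}$'s across grids of growing size mean, after a compactness / ultraproduct argument, that there is a generic point of $Q$ of full rank $s-1$ in which the fibres of $Q$ over pairs of coordinate blocks form a "group configuration". This is the step I expect to be the main obstacle: the combinatorial failure has to be converted into an honest Zilber--Hrushovski group configuration.

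To get there, I would first reduce to the case where each coordinate function is interalgebraic with a combination of the others only in a specified way. Then I would apply the inductive hypothesis to projections to lower arity; when $s\ge4$ these should be non-degenerate unless they themselves exhibit the additive structure, which then glues.

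\emph{Identifying the group.} Once the configuration exists, the o-minimal group configuration theorem (via Peterzil--Starchenko's trichotomy in o-minimal expansions of $\mathbb R$) yields a definable one-dimensional group acting locally, with $Q$ locally given by $g_1\cdots g_s=e$ after definable coordinate changes. A one-dimensional definable group in $\mathbb R$ is locally analytically isomorphic to $(\mathbb R,+)$. Composing these coordinate changes with the local isomorphism gives analytic bijections $\phi_i:U_i\to V$ with $\sum\phi_i(x_i)=0$ exactly on $Q$ near the generic point. The constant $\eta_0$ depends only on $s$, through the Zarankiewicz exponent, while $c$ depends on $s$ and $D$ via the description complexity.
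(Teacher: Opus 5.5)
You are right that the paper does not prove Theorem~\ref{thm:ES-general}. It is quoted from Chernikov--Peterzil--Starchenko (their Theorem~A), so citing it is exactly the paper's treatment. Your exclusivity argument is also correct (pad $A_1,\dots,A_{s-1}$ up to size $|A_s|$ rather than rescaling). The reconstruction you sketch, however, has genuine gaps. First, the combinatorial input you describe gives nothing at $s=3$. For the split $A_1$ versus $A_2\times A_3$, the semialgebraic Zarankiewicz bound with $d_1=1$, $d_2=2$ is no better than the trivial $O(n^2)$. The original Elekes--Szab\'o argument handles the ternary case differently: via Cauchy--Schwarz it passes to the $4$-ary relation $\{(x_1,x_2,x_1',x_2'):\exists x_3,\ (x_1,x_2,x_3),(x_1',x_2',x_3)\in Q\}$, whose grid count is $\gg|Q\cap(A_1\times A_2\times A_3)|^2/n$. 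So $s=3$ is reduced to arity $4$ rather than serving as the base of an induction.

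Second, and more seriously, the inductive step fails. The projections of $Q$ to $s-1$ coordinates are finite-to-one, so in the relevant case $\dim Q=s-1$ their images are full-dimensional subsets of $\mathbb R^{s-1}$, not instances of the theorem. What failure of generic $K_{k,k}$-freeness for a split $I\sqcup J$ (with $|I|,|J|\ge2$) actually gives, at a generic point $\bar a$ of $Q$, is a one-dimensional element $e\in\operatorname{acl}(a_I)\cap\operatorname{acl}(a_J)$. That is a factorisation of $Q$ through relations $Q'(x_I,e)$ and $Q''(e,x_J)$. Lack of power saving for $Q$ does not pass to $Q'$, $Q''$ by a direct count, since a priori $e$ takes far more than $n$ values on the grid. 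Even when both factors are additive, they need not glue. For example, $(x_1+x_2)^2+x_3+x_4=0$ factors through $e=x_1+x_2$ into two additive relations (near points with $e\neq0$). Yet on it $\partial_{x_1}x_4/\partial_{x_3}x_4=2(x_1+x_2)$ depends on $x_2$, which is impossible for any relation $\sum_i\psi_i(x_i)=0$. This example does have power saving: for the split $\{1,3\}\sqcup\{2,4\}$ the fibres are translates of a parabola, any two of which meet in at most one point. So one must use degeneracy of \emph{every} split with both parts of size at least $2$ simultaneously. The missing key ingredient is a higher-arity abelian group configuration theorem that turns this condition into a one-dimensional abelian group; that is the core of the Chernikov--Peterzil--Starchenko argument, not an induction on $s$. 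Finally, ``if the saving fails for every $\eta_0$'' would only produce an exponent depending on $Q$. The uniform $\eta_0(s)$ comes from the direct implication: no group structure $\Rightarrow$ some split is $K_{k,k}$-free off a lower-dimensional set (which contributes only $O(n^{s-2})$) $\Rightarrow$ the Zarankiewicz bound.
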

In our application we take $s=d+1$ and
\[
Q_{\mathcal P}
=
\{(\bar x,t)\in\mathbb R^d\times I:F(\bar x,t)=0\},
\qquad
F(\bar x,t)=\sum_{i=1}^d (x_i-a_i(t))^2-R^2(t).
\]
Since $\mathcal P$ is a polynomial family and $I$ is an interval, $Q_{\mathcal P}$ is semialgebraic, with description complexity bounded in terms of $d$ and the degree of $F$. The regularity assumption says precisely that the projection of $Q_{\mathcal P}$ onto any $d$ of the $d+1$ coordinates is finite-to-one. Thus Theorem~\ref{thm:ES-general} applies directly to $Q_{\mathcal P}$.

Recall that $\eta=\eta_0/2$. Hence our assumption gives
\[
|Q_{\mathcal P}\cap (A_1\times\cdots\times A_d\times T_0)| = \Omega(n^{d-\eta_0/2}).
\]
On the other hand, the incidence-bound alternative in Theorem~\ref{thm:ES-general} would give
\[
|Q_{\mathcal P}\cap (A_1\times\cdots\times A_d\times T_0)|= O(n^{d-\eta_0}).
\]
Since
\[
n^{d-\eta_0/2}\gg n^{d-\eta_0},
\]
these two estimates are incompatible for sufficiently large $n$. Therefore the incidence-bound alternative cannot hold. Hence the group-structure alternative holds, so there is a nonempty relatively open subset $W$ of $Q_{\mathcal P}$ on which
\[
F(\bar x,t)=0 \quad\Longleftrightarrow\quad \phi_1(x_1)+\cdots+\phi_d(x_d)+\phi_{d+1}(t)=0
\]
for one-variable analytic local isomorphisms $\phi_i$. Since $Q_{\mathcal P}$ is smooth and the two equations define the same hypersurface on $W$, their gradients are proportional there. As $\phi_i'\neq0$ for every $i$, every point of $W$ is coordinate regular. Thus $F$ is locally degenerate on $W$. Moreover, as observed in the proof of Theorem~\ref{thm:analytic-hypersphere-degeneracy}, $Q_{\mathcal P}$ is connected. Hence Lemma~\ref{lem:propagation-degeneracy}, applied on the connected hypersurface $Q_{\mathcal P}$, implies that $F$ is locally degenerate at every coordinate-regular point of $Q_{\mathcal P}$.

Therefore Theorem~\ref{thm:analytic-hypersphere-degeneracy} applies. It follows that the family $\mathcal P$ is concentric in dimensions $d\geq3$. In dimension $d=2$, the family is either concentric, or consists of circles of fixed radius whose centres lie on a line parallel to one of the coordinate axes. This proves Theorem~\ref{thm: pencil of spheres}.

\begin{remark}[Sharpness of Theorem~\ref{thm: pencil of spheres}]
The two alternatives in Theorem~\ref{thm: pencil of spheres} are sharp. In the concentric case, for each $i=1,\ldots,d$, take the coordinate
hyperplanes $x_i=\sqrt{k}$, $1\leq k\leq n$, and take the hyperspheres $\|x\|^2=m$, where $m\in\{d,2d,\ldots,dn\}$. Then a positive proportion of the grid points satisfy
\[
k_1+\cdots+k_d\equiv 0 \pmod d,
\]
and hence give $\Theta(n^d)$ rich points. In the planar exceptional case, the family
\[
(x-t)^2+y^2=1
\]
together with the choices $x=i\varepsilon$, $y=\sqrt{1-(j\varepsilon)^2}$, and $t=(i+j)\varepsilon$, for $1\leq i,j\leq n$ and $i+j\leq n+1$, gives $\Theta(n^2)$ rich points.
\end{remark}

\section{Proof of Theorem~\ref{thm: pin distance}}\label{sec: pinned distance}

In this section, we prove the higher-dimensional pinned distance result stated in Theorem~\ref{thm: pin distance}. Let $p_1,\dots,p_{d+1}\in \mathbb{R}^d$ be fixed points, and for each $i=1,\dots,d+1$, let
\[
\mathcal{S}_i=\left\{S(p_i,r): r \in \mathcal{R}_i \right\}
\]
be a family of hyperspheres centred at $p_i$, where $|\mathcal{R}_i|=n$. A point $q\in \mathbb{R}^d$ is called $(d+1)$-rich if, for every $i=1,\dots,d+1$, there exists $r_i\in \mathcal{R}_i$ such that
\[
q\in S(p_i,r_i).
\]
Equivalently,
\[
\|q-p_i\|^2=r_i^2 \qquad \text{for all} \qquad i=1,\dots,d+1.
\]
Set
\[
A_i=\{r^2:r\in \mathcal{R}_i\}.
\]
Then every $(d+1)$-rich point $q$ gives a tuple
\[
(t_1,\dots,t_{d+1})\in A_1\times\cdots\times A_{d+1}, \qquad t_i=\|q-p_i\|^2.
\]
The key point is that the variables $t_1,\dots,t_{d+1}$ satisfy a polynomial relation obtained by eliminating $q$. The following proposition characterizes the degeneracy of this polynomial relation in terms of the affine geometry of the configuration $p_1,\dots,p_{d+1}$.

\begin{proposition}\label{prop:distance-degenerate}
Let $p_1,\dots, p_{d+1} \in \R^d$ be affinely independent and define
\[
t_i=\|q - p_i\|^2,\quad q=(x_1,\dots,x_d)\in \R^d.
\]
Let $F(t_1,\dots, t_{d+1})=0$ be the polynomial relation obtained by eliminating $q$. Then~$F$ is non-degenerate in the sense of Definition~\ref{def:degenerate_polynomial}.
\end{proposition}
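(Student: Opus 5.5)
The plan is to argue by contradiction through the boundary variety framework, using Proposition~\ref{prop:boundary-continuation} in the ambient space $\mathbb R^{d+1}$ with coordinates $t_1,\ldots,t_{d+1}$.

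\textbf{Step 1: make $F$ explicit.} Since $p_1,\ldots,p_{d+1}$ are affinely independent, the differences
\[
t_i-t_{d+1}=-2\,q\cdot(p_i-p_{d+1})+\|p_i\|^2-\|p_{d+1}\|^2,\qquad i=1,\ldots,d,
\]
form a linear system with invertible coefficient matrix. Hence $q=L(t)$ for an affine map $L$. I take
\[
F(t)=\|L(t)-p_{d+1}\|^2-t_{d+1},
\]
which is a quadratic polynomial. I will then check that $Z(F)$ is exactly the image of the map $\Phi\colon q\mapsto(\|q-p_i\|^2)_{i}$. Indeed, if $F(t)=0$, put $q=L(t)$. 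Then $t_{d+1}=\|q-p_{d+1}\|^2$, and the linear relations recover every other $t_i$. The map $\Phi$ is injective with inverse $L$, so $V=Z(F)\cong\mathbb R^d$ is connected.

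\textbf{Step 2: compute the normal direction.} The differential of $\Phi$ is $dq\mapsto\bigl(2(q-p_i)\cdot dq\bigr)_i$. A vector $\lambda\in\mathbb R^{d+1}$ is normal to $V$ at $\Phi(q)$ if and only if $\sum_i\lambda_i(q-p_i)=0$. The solution space is spanned by the barycentric coordinate vector $\beta(q)=(\beta_1(q),\ldots,\beta_{d+1}(q))$ of $q$ with respect to the simplex, normalised by $\sum\beta_i=1$ and $\sum\beta_ip_i=q$. Therefore
\[
\nabla F(\Phi(q))=\mu(q)\,\beta(q)
\]
for a scalar $\mu(q)$. Since $\nabla F\neq0$ whenever $\Phi$ is an immersion (the matrix $\nabla L$ has full rank, so $\nabla F=0$ would force $L(t)=p_{d+1}$ and $1=0$), we get $\mu\neq0$ on $V$.

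From this:
\begin{itemize}
\item $V$ is smooth.
\item $F_{t_i}=0$ exactly when $q$ lies in the facet hyperplane $H_i$ of the simplex opposite $p_i$.
\item The coordinate-regular locus $V^\circ$ is $\Phi$ of the complement of $\bigcup_i H_i$, hence dense.
\item The boundary variety $\Sigma_i(V)=\Phi(H_i)$ is a smooth connected $(d-1)$-dimensional analytic set.
\end{itemize}

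\textbf{Step 3: conclude.} Suppose $F$ were degenerate. Then it is locally degenerate at every point of $V^\circ$, and Proposition~\ref{prop:boundary-continuation} applies near any point of $\Sigma_i(V)$. It says that a local piece of $\Phi(H_i)$ lies in a slice $\{t_j=c\}$. That is, $\|q-p_j\|^2=c$ on a nonempty open piece of the hyperplane $H_i$, for some $j$.

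This is impossible. Since $d\geq2$, the set $H_i$ has positive dimension. The function $q\mapsto\|q-p_j\|^2$ restricted to a hyperplane is a nonconstant quadratic: its gradient $2(q-p_j)$ cannot be normal to $H_i$ at every point of an open piece, because it changes as $q$ moves within $H_i$. This contradiction gives non-degeneracy.

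The step needing the most care is Step 2. There I must justify that $Z(F)$ has no extra real points beyond $\Phi(\mathbb R^d)$, and that the gradient is exactly proportional to the barycentric coordinates with a nonvanishing factor. Only then are the smoothness and density hypotheses of Proposition~\ref{prop:boundary-continuation} genuinely met. I would verify this by differentiating $F(\Phi(q))\equiv0$ and using the full rank of $\nabla L$.
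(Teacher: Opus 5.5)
Your proposal is correct and follows essentially the same route as the paper: realise $Z(F)$ as the diffeomorphic image of $\mathbb R^d$ under the distance map, check that the coordinate-regular locus is dense, identify a boundary variety as the image of a hyperplane, and contradict Proposition~\ref{prop:boundary-continuation} because $\|q-p_j\|^2$ is never locally constant on a hyperplane. Your barycentric description $\nabla F(\Phi(q))=\mu\,\beta(q)$ is a coordinate-free version of the paper's matrix computation (with $p_{d+1}=0$ one has $(P^{-T}q)_j=\beta_j(q)$). The quickest way to see $\mu\neq0$ is that $L$ depends only on the differences $t_i-t_{d+1}$, so $\sum_i F_{t_i}\equiv-1$ and hence $\mu=-1$.
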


\begin{proof}
Without loss of generality, we may translate the configuration and assume that $p_{d+1}$ is the origin. Then the vectors
\[
p_i=p_i-p_{d+1}, \qquad i=1,\dots,d,
\]
are linearly independent.
	
Let $P$ be the $d\times d$ matrix whose $i$-th row is $p_i$. Since the vectors $p_1,\dots,p_d$ are linearly independent, the matrix $P$ is invertible. For $i=1,\dots,d$, consider
\[
t_i-t_{d+1}= \|q-p_i\|^2-\|q\|^2, \qquad q=(x_1,\dots,x_d)\in\mathbb R^d.
\]
Expanding the right-hand side gives
\[
t_i-t_{d+1}=\|p_i\|^2-2p_i\cdot q.
\]
Define
\[
u_i=t_i-t_{d+1}-\|p_i\|^2, \qquad i=1,\dots,d,
\]
and write
\[
u=(u_1,\dots,u_d)^T.
\]
Then
\[
u_i=-2p_i\cdot q,
\]
and hence
\[
u=-2Pq.
\]
Since $P$ is invertible, $q$ can be computed uniquely in terms of $u$:
\[
q=-\frac{1}{2}P^{-1}u.
\]
Using $t_{d+1}=\|q\|^2=q^Tq$, we obtain
\[
t_{d+1}=\left(-\frac{1}{2}P^{-1}u\right)^T \left(-\frac{1}{2}P^{-1}u\right).
\]
Thus
\[
t_{d+1}=\frac{1}{4}u^TP^{-T}P^{-1}u.
\]
Set
\[
B=P^{-T}P^{-1}.
\]
Then the polynomial relation among $t_1,\dots,t_{d+1}$ is
\begin{equation}\label{eq: definite polynomial}
F(t_1,\dots,t_{d+1})=t_{d+1}-\frac{1}{4}u^TBu.
\end{equation}
Since $P$ is invertible, the matrix $B$ is symmetric positive definite. Indeed,
\[
u^TBu=(P^{-1}u)^T(P^{-1}u)=\|P^{-1}u\|^2.
\]
\textbf{Claim $Z(F)$ is smooth and irreducible:}

To do this, introduce the new variables
\[
z=t_{d+1},\qquad u_i=t_i-t_{d+1}-\|p_i\|^2, \qquad i=1,\dots,d.
\]
This is an invertible affine change of coordinates. In the variables $(u_1,\dots,u_d,z)$, the polynomial $F$ has the form
\[
F(u,z)=z-\frac14 u^TBu.
\]
Hence
\[
Z(F)=\left\{(u,z):z=\frac14 u^TBu\right\}
\]
is the graph of a polynomial function over $\R^d$. Therefore $Z(F)$ is irreducible. Moreover,
\[
\frac{\partial F}{\partial z}=1,
\]
so $Z(F)$ is smooth.

We now prove that $F$ is non-degenerate in the sense of Definition~\ref{def:degenerate_polynomial}. We argue using the boundary variety criterion. Consider the boundary variety
\[
\Sigma_1=\left\{t\in Z(F): \frac{\partial F}{\partial t_1}(t)=0 \right\}.
\]
Since
\[
u_i=t_i-t_{d+1}-\|p_i\|^2,
\]
we have
\[
\frac{\partial u}{\partial t_1}=e_1=(1,0,\dots,0)^T.
\]
Recall that
\[
F(t)=t_{d+1}-\frac{1}{4}u^TBu.
\]
Since $B$ is symmetric, the chain rule gives
\[
\frac{\partial}{\partial t_1}(u^TBu)=2e_1^TBu.
\]
Therefore
\[
\frac{\partial F}{\partial t_1}=-\frac{1}{2}e_1^TBu=-\frac{1}{2}(Bu)_1.
\]
Thus
\[
\frac{\partial F}{\partial t_1}=0\quad\Longleftrightarrow \quad(Bu)_1=0.
\]
Since $u=-2Pq$, we have
\[
Bu=P^{-T}P^{-1}(-2Pq)=-2P^{-T}q.
\]
Therefore
\begin{equation}\label{eq: partial derivative}
\frac{\partial F}{\partial t_1}=0\quad\Longleftrightarrow\quad(P^{-T}q)_1=0.
\end{equation}
Now define the distance map
\[
\Phi:\R^d\to \R^{d+1},\qquad \Phi(q)=\left(\|q-p_1\|^2,\dots,\|q-p_{d+1}\|^2\right).
\]
By construction, $Z(F)$ is the image of $\Phi$. Moreover, $\Phi$ is an analytic diffeomorphism from $\R^d$ onto $Z(F)$: indeed, in the affine coordinates above its inverse is given by $q=-\frac12P^{-1}u$. Define
\[
H=\left\{q\in\mathbb R^d:(P^{-T}q)_1=0\right\}.
\]
This is a hyperplane in the original $q$-space. We claim that
\[
\Sigma_1=\Phi(H).
\]
Indeed, let $t\in\Sigma_1$. Since $t\in Z(F)$, there exists $q\in\R^d$ such that
\[
\Phi(q)=t.
\]
Since $F_{t_1}(t)=0$, equation~\eqref{eq: partial derivative} gives
\[
(P^{-T}q)_1=0.
\]
Hence $q\in H$, and therefore $t\in\Phi(H)$. This proves
\[
\Sigma_1\subset\Phi(H).
\]
Conversely, let $q\in H$, and put $t=\Phi(q)$. Then $t\in Z(F)$. Moreover, since $q\in H$, we have
\[
(P^{-T}q)_1=0.
\]
Again by equation~\eqref{eq: partial derivative}, this implies
\[
F_{t_1}(t)=0.
\]
Therefore $t\in\Sigma_1$. Hence
\[
\Phi(H)\subset\Sigma_1.
\]
Thus
\[
\Sigma_1=\Phi(H).
\]
We now show that no local component of $\Sigma_1$ is contained in a coordinate slice of the form $t_j=c$. Fix $j\in\{1,\dots,d+1\}$. On $H$, the $j$-th coordinate of the distance map is
\[
t_j(q)=\|q-p_j\|^2,
\]
where $p_{d+1}=0$. We claim that this function is not locally constant on $H$. Indeed, since $d\geq2$, the hyperplane $H$ has positive dimension. Choose $q\in H$ and a nonzero vector $v$ tangent to $H$. Then $q+sv\in H$ for all sufficiently small $s\in\mathbb R$. Along this line we have
\[
t_j(q+sv)=\|q+sv-p_j\|^2=\|q-p_j\|^2+2s\,v\cdot(q-p_j)+s^2\|v\|^2.
\]
Since $v\neq0$, we have $\|v\|^2>0$, and therefore this is a nonconstant quadratic polynomial in $s$. Hence $t_j$ is not locally constant on $H$. It follows that no coordinate~$t_j$ is locally constant on $\Phi(H)=\Sigma_1$. Therefore no local irreducible component of $\Sigma_1$ is contained in a coordinate slice
\[
t_j=c.
\]
Suppose, for contradiction, that $F$ is degenerate in the sense of Definition~\ref{def:degenerate_polynomial}. We first verify that its coordinate-regular locus is dense in $Z(F)$.

As observed above, in the coordinates $(u,z)$ the hypersurface $Z(F)$
is the graph
\[
z=\frac14 u^TBu,
\]
and hence is a connected smooth real analytic manifold. Moreover, for
$j=1,\ldots,d$,
\[
\frac{\partial F}{\partial t_j}
=
-\frac12(Bu)_j,
\]
while
\[
\frac{\partial F}{\partial t_{d+1}}
=
1+\frac12\sum_{j=1}^d(Bu)_j.
\]
Since $B$ is invertible and $u$ ranges over all of $\mathbb R^d$ on
$Z(F)$, none of these functions vanishes identically on $Z(F)$.
Therefore the zero set of each coordinate partial derivative has empty
interior in $Z(F)$, and hence the coordinate-regular locus is dense in
$Z(F)$.

Proposition~\ref{prop:boundary-continuation} now implies that every
local irreducible component of $\Sigma_1$ is contained in a coordinate
slice. This contradicts what we have just proved. Therefore $F$ is
non-degenerate.

\end{proof}

Now we are ready to prove Theorem~\ref{thm: pin distance}.
\begin{proof}
Assume, for contradiction, that $p_1,\dots,p_{d+1}$ are affinely independent. For each $i$, set
\[
A_i=\{r^2:r\in \mathcal{R}_i\}.
\]
A $(d+1)$-rich point $q$ determines a tuple
\[
(t_1,\dots,t_{d+1})\in A_1\times\cdots\times A_{d+1}, \qquad t_i=\|q-p_i\|^2,
\]
satisfying the polynomial relation $F(t_1,\dots,t_{d+1})=0$ in \eqref{eq: definite polynomial}. Moreover, distinct $(d+1)$-rich points determine distinct tuples. Indeed, suppose that $q$ and $q'$ determine the same tuple. Then, for
each $i=1,\ldots,d$,
\[
\|q-p_i\|^2-\|q-p_{d+1}\|^2=\|q'-p_i\|^2-\|q'-p_{d+1}\|^2.
\]
Expanding and cancelling gives
\[
(p_i-p_{d+1})\cdot(q-q')=0, \qquad i=1,\ldots,d.
\]
Since $p_1,\ldots,p_{d+1}$ are affinely independent, the vectors
\[
p_1-p_{d+1},\ldots,p_d-p_{d+1}
\]
form a basis of $\mathbb R^d$. Hence $q=q'$. 
Thus the map from $(d+1)$-rich points to the corresponding tuples is
injective.

We now apply Theorem~\ref{thm:ES-general} to the semialgebraic relation $Q=Z(F)\subset\mathbb R^{d+1}$. It remains only to verify the projection condition. In fact, every projection of $Z(F)$ onto $d$ of the $d+1$ coordinates has fibres of size at most two.

First fix $j\in\{1,\ldots,d\}$ and fix all coordinates except $t_j$. Since
\[
u_j=t_j-t_{d+1}-\|p_j\|^2,
\]
while the other components of $u$ are fixed, the polynomial
\[
F=t_{d+1}-\frac14 u^TBu
\]
is a quadratic polynomial in $t_j$ whose quadratic coefficient is
\[
-\frac14 B_{jj}.
\]
Since $B$ is positive definite, $B_{jj}>0$. Thus, for fixed values of the remaining coordinates, the equation $F=0$ has at most two solutions in $t_j$.

It remains to consider the projection which forgets $t_{d+1}$. Fix $t_1,\ldots,t_d$ and set
\[
a_i=t_i-\|p_i\|^2,\qquad a=(a_1,\ldots,a_d)^T,
\]
and let
\[
\mathbf{1}=(1,\ldots,1)^T.
\]
Writing $z=t_{d+1}$, we have
\[
u=a-z\mathbf{1}.
\]
Hence
\[
F
=
z-\frac14(a-z\mathbf{1})^TB(a-z\mathbf{1}),
\]
which is a quadratic polynomial in $z$ with quadratic coefficient
\[
-\frac14\,\mathbf{1}^TB\mathbf{1}.
\]
Since $B$ is positive definite and $\mathbf{1}\neq0$, we have
\[
\mathbf{1}^TB\mathbf{1}>0.
\]
Thus this equation also has at most two solutions in $z$.

Consequently, the projection of $Z(F)$ onto any $d$ of the $d+1$ coordinates is at most two-to-one. In particular, the projection condition in Theorem~\ref{thm:ES-general} is satisfied.
Therefore $Q=Z(F)$ satisfies the hypotheses of Theorem~\ref{thm:ES-general} for $s=d+1$. Consequently, if $N$ denotes the number of $(d+1)$-rich points, then
\[
\bigl|Z(F)\cap (A_1\times\cdots\times A_{d+1})\bigr| \geq N.
\]
Now suppose that the number of $(d+1)$-rich points is $\Omega(n^{d-\eta})$, where $\eta=\eta_0/2$. By the injectivity established above,
\[
\bigl|Z(F)\cap (A_1\times\cdots\times A_{d+1})\bigr| =\Omega(n^{d-\eta_0/2}).
\]
This contradicts the incidence-bound alternative in Theorem~\ref{thm:ES-general}, which gives an upper bound $O(n^{d-\eta_0})$. Hence the group-structure alternative holds. Under our assumption that $p_1,\ldots,p_{d+1}$ are affinely independent, the proof of Proposition~\ref{prop:distance-degenerate} shows that $Z(F)$ is a connected smooth real analytic hypersurface. On the open set given by the group-structure alternative, the equation $F=0$ and the local additive equation define the same smooth hypersurface. Their gradients are therefore proportional there. Since the functions in the additive representation are analytic local isomorphisms, all of their derivatives are nonzero. Hence, after shrinking the open set if necessary, it is contained in the coordinate-regular locus of $Z(F)$, and $F$ is locally degenerate there.

Lemma~\ref{lem:propagation-degeneracy}, applied in $d+1$ variables, therefore implies that $F$ is locally degenerate at every coordinate-regular point of $Z(F)$. Hence $F$ is degenerate in the sense of Definition~\ref{def:degenerate_polynomial}. This contradicts Proposition~\ref{prop:distance-degenerate}, which shows that $F$ is non-degenerate when $p_1,\ldots,p_{d+1}$ are affinely independent.

\end{proof}


\section*{Funding}

This work was supported by the Austrian Science Fund (FWF) [grant number J 4741-N].

\section*{Acknowledgements}

The author would also like to acknowledge the assistance of ChatGPT (OpenAI) in discussing parts of the manuscript and in improving its presentation. All mathematical statements and proofs were checked and remain the responsibility of the author.

\end{document}